\let\emptyset\varnothing
\newcommand\level[1]{%
  \ifcase#1\relax\expandafter\chapter\or
    \expandafter\section\or
    \expandafter\subsection\or
    \expandafter\subsubsection\else
    \def\next{\@level{#1}}\expandafter\next
  \fi}
\newcommand{\@level}[1]{%
  \@startsection{level#1}
    {#1}
    {\z@}%
    {-3.25ex\@plus -1ex \@minus -.2ex}%
    {1.5ex \@plus .2ex}%
    {\normalfont\normalsize\bfseries}}
\newcounter{level4}[subsubsection]
\edef\x{\endgroup
    \noexpand\newcounter{level\number\numexpr\count@+1\relax}[level\number\count@]
    \noexpand\@namedef{thelevel\number\numexpr\count@+1\relax}{%
      \noexpand\@nameuse{thelevel\number\count@}.\noexpand\arabic{level\number\numexpr\count@+1\relax}}
    \noexpand\@namedef{level\number\numexpr\count@+1\relax mark}####1{}}
\newtheorem{theorem}{Theorem}[section]
\newtheorem{lemma}[theorem]{Lemma}
\newtheorem{proposition}[theorem]{Proposition}
\theoremstyle{definition}
\newtheorem{definition}[theorem]{Definition}
\newtheorem{example}[theorem]{Example}
\theoremstyle{remark}
\newtheorem{remark}[theorem]{Remark}
\theoremstyle{consequence}
\def\la{\lambda}
\def\1{\mathbf{1}}
\def\la{\lambda}
\def\p{^{\prime}}
\def\k{\kappa}
\def\cN{\mathcal{N}}
\def\cR{\mathcal{R}}
\def\cS{\mathcal{S}}
\def\cC{\mathcal{C}}
\def\E{\mathbb{E}}
\def\R{\mathbb{R}}
\def\Z{\mathbb{Z}}
\numberwithin{equation}{section}
\begin{document}

\title[Identifiability of SDEs for RNs]{\vspace*{-1.4cm}Identifiability of SDEs for reaction networks}

\author{
Louis Faul\textsuperscript{1}, 
Linard Hoessly\textsuperscript{2}, 
Panqiu Xia\textsuperscript{3}
}

\thanks{\textsuperscript{1} Department of Mathematics, University of Fribourg, Fribourg, Switzerland. Email: \href{mailto:louis.faul@unifr.ch}{louis.faul@unifr.ch}}
\thanks{\textsuperscript{2} Clinic for Transplantation Immunology and Nephrology, Basel University Hospital, Basel, Switzerland. Email: \href{mailto:linarddavid.hoessly@usb.ch}{linarddavid.hoessly@usb.ch}}
\thanks{\textsuperscript{3} School of Mathematics, Cardiff University, Cardiff, United Kingdom. Email: \href{mailto:xiap@cardiff.ac.uk}{xiap@cardiff.ac.uk}}

\date{}
\begin{abstract}
Biochemical reaction networks are widely applied across scientific disciplines to model complex dynamic systems. We investigate the diffusion approximation of reaction networks with mass-action kinetics, focusing on the identifiability of the
stochastic differential equations associated to the reaction network.
  We derive conditions under which the law of the diffusion approximation is identifiable and provide theorems for verifying identifiability in practice. Notably, our results show that some reaction networks have non-identifiable reaction rates, even when the law of the corresponding stochastic process is completely known. Moreover, we show that reaction networks with distinct graphical structures can generate the same diffusion law under specific choices of reaction rates. Finally, we compare our framework with identifiability results in the deterministic ODE setting and the discrete continuous-time Markov chain models for reaction networks.
 \end{abstract}

 \keywords{Diffusion approximation; Langevin dynamics; mass-action kinetics; reaction networks; stochastic differential equations;  
structural identifiability.}

\clearpage\maketitle
\thispagestyle{empty}
\vspace*{-0.6cm}


\section{Introduction}\label{summary_1}

Reaction networks (RNs) are a fascinating theme, which offers both a framework and structures to describe biochemical dynamics at different scales.
They are applied across disciplines like ecology \cite{ecol_May}, sociology \cite{sociology}, cancer \cite{cancer1,cancer2}, or neural networks \cite{Goutsias_ov}. Moreover, mathematical research on RNs drives both different fields of mathematics and their interplay, including dynamical systems, probability, statistics, combinatorics, or applied algebraic geometry \cite{anderson2,craciun2,overv_Gorban,HOESSLY_WIUF_XIA_2023,approx_kurtz,Hoessly_esc}.

RNs are mostly given via their reaction graph, where arrows correspond to reactions. As an example, consider the following RN, which serves as a simplified model for gene expression \cite{Thattai8614}
 \begin{equation} \label{eq_gene}\emptyset \rightleftharpoons S_1,\quad S_2\to \emptyset,\quad S_1\to S_1+S_2.\end{equation}
 While such a simple example already displays complex behavior with the exact stationary distribution currently unknown \cite{Thattai8614}, realistic RNs of interest are usually fairly big, with many reactions and uncertainty in parameter values. Correspondingly, methods to understand properties of their dynamics based on the RNs are a main focus.

 These dynamics can be studied in three different settings depending on the number of molecules. When the number of molecules is low, the state of the system is modeled stochastically using a Continuous Time Markov Chain (CTMC) \cite{approx_kurtz,mcquarrie_1967}.
{In the reaction-network and chemical-physics literature, this same mathematical object is often referred to as a \emph{jump process}, whose construction from a RN is governed by the rules of \emph{stochastic chemical kinetics} and whose sample paths are typically simulated using variants of \emph{Gillespie’s algorithm} \cite{gillespiea,gillespieb}.} If there are a lot of molecules, species abundances are rescaled into concentrations, and studied through a system of ordinary differential equations \cite{craciun2,Feliuinj,Michaelis_M,Wegscheider}. 
 On the intermediate scale, a diffusion approximation is used \cite{Kurtz1976,KURTZ1978,approx_kurtz} via stochastic differential equations (SDEs). This formalism still allows the stochasticity of the system to be taken into account and speeds up the simulations. Unfortunately the two commonly used diffusion approximations namely the linear noise approximation (LNA), and the Langevin approximation (LA), don't satisfy the natural condition that the abundances stay nonnegative for all time, and a reflected diffusion, the so-called Constrained Langevin approximation (CLA), has therefore been introduced recently \cite{leite2019}.

A crucial aspect of such quantitative methods is the ability to match the models to the available data. In the case of the ODE model, this can simply be done e.g. via least squares. However, for the stochastic models, statistical inference is required, which tends to be more challenging \cite{mazza,wilkinson2018stochastic}. For both, an often overlooked issue in practice is identifiability, where a statistical model is called identifiable if its parametrisation map $\theta\mapsto p_\theta$ is injective. For RNs, there are two settings of interest: reaction-identifiability i.e. identifiability of rate constants given the RN structure, and confoundability e.g. between two different RNs. In practice, both can be necessary for proper parameter inference. Identifiability has been analytically studied in the deterministic setting \cite{ident_CRNs}, where depending on the RN both rate constants as well as the RN structure are potentially not identifiable. In CTMC models, both RN structure and rate constants can be identified as long as the observed state space available is big enough \cite{enciso2020identifiability}. Concerning diffusion approximation, several works have investigated related questions, e.g.
identifiability by moment equations \cite{ident_review} for the Langevin equation, identifiability through Fisher information matrix \cite{ident_LNA} for the LNA, or investigations that consider stationary distributions of the linear noise approximation (LNA) \cite{grunberg2023identifiability}. However, identifiability has not been analytically studied for RNs w.r.t. their LAs. 

Identifiability can generally be divided into \emph{structural identifiability}, which is a property of the model and observation map under the assumption of ideal and complete information; and \emph{practical identifiability}, which accounts for inference from real-world data subject to limitations, incompleteness, and measurement or observation errors. In the following, we focus on the analytical characterisation of \emph{structural identifiability} for RNs modelled by SDEs. Since SDE models are typically obtained through a finer scaling limit than that used for deriving the corresponding ODE \cite{approx_kurtz}, it is reasonable to conjecture that the ODE is also coarser concerning identifiability. We confirm this conjecture and characterise conditions for identifiability of diffusion approximations through identifiability of the SDEs. Throughout the manuscript we assume mass-action kinetics, i.e., kinetics where the reaction rates are proportional to the amounts of the reactants, though other kinetics are possible as well (e.g., Hill kinetics type I/II, or Michaelis-Menten, cf. \cite{anderson2,reg_cellular,horn3,grima_kin}).

We present our results for SDEs without reflecting boundary conditions focusing on the standard Langevin approximation (LA) \cite{Kurtz1976,KURTZ1978,approx_kurtz}. As our analysis is framed in terms of the generators of the SDEs (see Section \ref{identif_generator}), the results are likely extendable to SDEs with reflecting boundary conditions, i.e., the CLA \cite{anderson2019constrained,leite2019}. LA and CLA share the same differential operator as their generator, differing only in the set of test functions: for CLA, these functions must satisfy additional boundary conditions. Although this extension falls outside the present scope, it offers a natural avenue for future exploration.

To motivate our investigation, we give the following simple example illustrating that the SDE associated with a reaction network may not be identifiable. Specifically, in this example, two reaction networks with the same structure but different reaction rates can give rise to identical SDEs, making them indistinguishable based on observed process realisations. 

\begin{example}\label{1d_non_ident_generator_intro}
Let $S$ be a chemical species and consider the following RN with two different sets of rate constants:
$$2S\xleftarrow{1}\emptyset\xrightarrow{4}S\xrightarrow{1} \emptyset\xrightarrow{2}3S,$$
$$2S\xleftarrow{4}\emptyset\xrightarrow{1}S \xrightarrow{1} \emptyset\xrightarrow{1}3S .$$
Let $s$ denote the abundance of species $S$. In the deterministic setting both RNs give the same ODE:
$$\frac{ds}{dt}=12-s.$$
Hence the rate constants aren't reaction-identifiable w.r.t. the ODE. Concerning their SDEs, one can calculate their generator which is determined by the drift vector and diffusion matrix (see \eqref{drift_vector} and \eqref{diffusion_matrix}), which are for both RNs given by:
$$A(s)=12-s,\quad B(s)=s+26 .$$
The two RNs give the same generator and thus the reaction rates aren't reaction-identifiable w.r.t. their generators. 
As the corresponding SDE is regular enough and since the generator determines the law, we conclude that the rate constants are not reaction-identifiable w.r.t. the law of the SDE. 
\end{example}

{\subsection*{Outline}
In Section~2 we recall basic notions on reaction networks with mass-action kinetics, and present the SDE corresponding to the Langevin approximation. In Section~3 we define reaction-identifiability and (un-)confoundability with respect to SDE dynamics and state our main characterisations, together with an extension to linear conjugacy. Section~4 is devoted to examples: we discuss special classes of reaction-identifiable networks and provide counterexamples. In Section~5 we reformulate identifiability in terms of generators and establish the link between identifiability of SDEs and of their generators. Section~6 contains concluding remarks and perspectives, while all proofs of the main results are collected in Section~7.
}

\section{Preliminaries}\label{RN_intro}

\subsection{Notations}
Let $\mathbb{Z}^m_{\geq 0}$ represent the m-dimensional lattice of nonnegative integers, $\mathbb{R}^m_{\geq 0}$ the subset of $\mathbb{R}^m$ whose coordinates are all nonnegative, and $\mathbb{R}^m_{> 0}$ the subset of $\mathbb{R}^m$ whose coordinates are positive.
The set of $n \times m$ matrices with real entries is written $\mathbb{R}^{n} \otimes \R^m$, where the transpose of a matrix $A$ or a vector $x$ is denoted $A^{\intercal}$ and $x^{\intercal}$. For two vectors $x, y \in \mathbb{R}^m$ (viewed as $m \times 1$ column matrices), we denote by $x \cdot y^{\intercal}$ the resulting $m \times m$ matrix product.
For a set $U \subseteq \mathbb{R}^m$, let 
$C_c ^2(U)$ be the set of twice continuously differentiable functions that have
compact support on $U$. Similarly, $C_c ^\infty(U)$ represents the set of infinitely continuously differentiable functions that have
compact support on $U$.
Let $X$ and $Y$ be two random variables; we write $X \overset{d}{=} Y$ if they have the same distribution.

\subsection{Reaction networks}

In this subsection, we provide a brief introduction to the concepts of RNs and derive both deterministic and stochastic formulations of their dynamics. For a comprehensive treatment of this topic, we refer the reader to \cite{anderson2015stochastic,feinberg2019foundations}.

\begin{definition}\cite{feinberg2019foundations}
    A \textbf{reaction network }(RN) is a triple $(\cS,\cC,\cR) \eqqcolon \cN$, where
 $\cS$ is the set of \textbf{species} $\cS=\{S_1,\dots,S_n\}$, $\cC = \{C_1, \dots, C_m\}$ is the set of \textbf{complexes}, and $\cR$ is the set of \textbf{reactions} $\cR=\{r_1,\dots,r_d\}$. The set of \textbf{species} is considered as an orthogonal basis in the Euclidean space $\R^n$, meaning that each species is a basis vector.  
 A \textbf{complex} is a nonnegative integer linear combination of species, represented as a vector in $\Z_{\geq 0}^n$. That is, for each $y \in \cC$, we express
\[
y = (y_1,\dots, y_n) = \sum_{i=1}^n y_i S_i.
\]
A \textbf{reaction} is an ordered pair $(y, y\p)\in \cR$ with $y,y\p\in\cC$ and is typically written as $y\to y\p $. In this reaction the \textbf{source complex} $y$ is consumed and the \textbf{product complex} $y\p$ is produced.
 \end{definition}

 The \textbf{molecularity} of a reaction refers to the total number of reactant molecules involved in an elementary reaction step.  Specifically, for a reaction $y\to y\p \in\cR$, the molecularity is given by $|y| = \sum\limits_{i=1}^n y_i$. Correspondingly we call such reactions unimolecular, bimolecular, three-molecular or $p$-molecular reactions provided $|y| = 1, 2, 3$, or $p$.
The \textbf{reaction vector} for a reaction $r = y \to y\p$ is defined as $l_r = l_{y \to y'} = y' - y \in \mathbb{Z}^n$. 
The \textbf{stoichiometric matrix} $\Gamma$ is an $n \times d$ matrix, where each column represents the reaction vector of a corresponding reaction.

Assuming the law of mass-action, each reaction $r = y\to y\p$ is assigned a positive  \textbf{rate constant} 
$\k_r = \k_{y\to y\p}$. The \textbf{rate constant vector} is defined as 
\[
  \k = (\k_1,\dots, \k_d) = (\k_{r_1},\dots, \k_{r_d}) \in\R_{>0}^d.
  \]

Reaction network dynamics can be modeled in various ways. For large molecule counts,  \textbf{ordinary differential equations} (ODEs) are commonly used.  Let $x(t)$ denote the species concentration vector, a time-dependent function in $\R^n$. Then, it satisfies
\begin{equation}\label{ode}
\dot{x} = F(x) =  \sum\limits_{r=1}^d l_r \lambda_r(x) , 
\end{equation}
where $\lambda_r(x)$ is the \textbf{kinetics}---the reaction rate of $y_r \to y_r\p$ evaluated at $x \in \mathbb{R}^n_{\geq 0}$. Under the law of mass-action, the kinetics (\textbf{mass-action kinetics}) with rate constant $\k_r$ is written as
\begin{align}\label{def_ma-kinetics}
\lambda_r(x) = \kappa_r x^{y_r} =\kappa_r \prod\limits_{i=1}^n x_i^{y_{i,r}}. 
\end{align}

In contrast, in a system of  small molecule counts, random fluctuations are significant, and thus \textbf{continuous time Markov chains} (CTMCs) are used to model the dynamics of a reaction network. The species counts, denoted by $X_t$, is a $\Z_{\geq 0}^n$-valued CTMC, satisfying the equation
\[
  X(t) = X(0) + \sum_{r = 1}^d Y_{r} \Big(\int_0^t \widetilde{\lambda}_{r} (X(s))\Big) l_r,
\]
where $\{Y_1,\dots, Y_d\}$ is a set of i.i.d. Poisson processes with unit rates, and under the law of mass-action, the kinetics in the stochastic modeling with rate constant $\k_r$ is given by
$$\widetilde{\lambda}_r(x)  = \kappa_r \prod\limits_{i=1}^n \frac{x_i!}{(x_i - y_{i,r})!}1_{\{x_i\geq y_{i,r}\}}.$$
Throughout this article, we denote by $(\cN, \k)$ for an RN $\cN$ equipped with mass-action kinetics specified by the rate constant vector $\k$.

We conclude this subsection by introducing the definition of subnetworks.
\begin{definition} \cite{fontanil2021common}
    Let $\mathcal{N}=(\mathcal{S},\mathcal{C},\mathcal{R})$ be an RN. Then, any reaction network $\mathcal{N}'=(\mathcal{S}',\mathcal{C}',\mathcal{R}')$ is a subnetwork of $\mathcal{N}$ if $\mathcal{R}' \subseteq \mathcal{R}$ ({which implies both $\mathcal{S}' \subseteq \mathcal{S}$ and $\cC' \subseteq \cC$}).
\end{definition}

\subsection{Langevin approximation}

In practice, the CTMCs are typically studied through Monte Carlo simulations. Since the discrete stochastic simulations are generally computationally expensive, diffusion approximations, such as \textbf{Linear Noise approximation} (LNA) \cite{vankampen1} and \textbf{Langevin approximation} (LA) \cite{Kurtz1976,KURTZ1978}, are commonly used in numerical computation. Recently, a  constrained Langevin approximation \cite{anderson2019constrained,leite2019} was developed to address issues arising when the approximating process in LNA or LA reaches the boundary of the positive orthant. In this article, we focus on the identifiability of LA for stochastically modelled  RNs.

As a frequently used diffusion approximation for stochastically modelled RNs, the LA is derived from the \textbf{chemical Langevin Equation} (CLE) \cite{Gillespie_di}, an It\^{o}-type \textbf{stochastic differential equation} (SDE). The LA provides an accurate approximation for stochastic RNs up to the first hitting time of the boundary of the positive orthant and is preferred for capturing nonlinear effects in random fluctuations. Specifically, for an RN $(\cN, \k)$, the LA satisfies the following $n$-dimensional CLE:
\begin{align}\label{Ito_sde} 
  d X_t = A(X_t) dt + \sigma(X_t) dW_t,
\end{align}
for all $t > 0$ with an initial value $X_0 \in \R_{> 0}^n$, where $A \colon \R^n \to \R^n$ is the \textbf{drift vector}, $\sigma \colon \R^n \to \R^{n} \otimes \R^{n}$ is the \textbf{diffusion coefficient}, and $W$ is a $n$-dimensional Brownian motion. In the context of LA for stochastic RNs, $A$ is given by
\begin{equation}\label{drift_vector}
      A_i(x) = \sum_{r=1}^d \Gamma_{i,r}\la_r(x) = \sum_{r=1}^d (y'_{i,r} - y_{i,r}) \kappa_r x^{y_r}, \quad i = 1,\dots, n;
\end{equation}
and $\sigma$ is given as the unique positive semi-definite square root (existence and uniqueness of square roots for any symmetric positive semi-definite matrix is well-established; cf. \cite{bhatia97}.) of the \textbf{diffusion matrix} $B$, with entries
\begin{equation}\label{diffusion_matrix}
      B_{i,j}(x)=
(\sigma(x) \sigma(x)^{\intercal})_{i,j} = \sum_{r=1}^d \Gamma_{ir}\Gamma_{jr} \la_r(x) = \sum_{r = 1}^d (y'_{i,r} - y_{i,r}) (y'_{j,r} - y_{j,r}) \kappa_r x^{y_r}, \quad i, j = 1, \dots, n.
\end{equation}
Additionally, we recall that in \eqref{drift_vector} and \eqref{diffusion_matrix}, $\Gamma$ denotes the stoichiometric matrix, and $\lambda$ represents the mass-action kinetics defined in \eqref{def_ma-kinetics}. In particular, when the initial state is important, we use the notation $X_t(x)$ to denote the solution to \eqref{Ito_sde} with $X_0 = x$.

\section{Identifiability of RNs given dynamics}\label{ident_cons}

Although solutions to \eqref{Ito_sde} may lack uniqueness and/or may explode in finite time, they are well-defined and unique up to a stopping time when restricted to a bounded open subset of the positive orthant (see Subsection \ref{SDEs_theory}).  This property enables us to explore the identifiability of RNs given their dynamics in this section. We begin with some definitions.
 \begin{definition}\label{def_fidis}
     Two stochastic processes $(X_t)_{t\geq0},({X}'_t)_{t\geq0}$ have the same distribution if they have the same finite dimensional distributions (fidis), which is denoted $(X_t)_{t\geq0}\overset{d}{=}({X}'_t)_{t\geq0}$.
 \end{definition}

\begin{definition}
Let $\cN = (\cS, \cC, \cR)$ and $\cN' = (\cS, \cC', \cR')$ be two RNs with the same set of species, where $\cR \neq \cR'$.
\begin{itemize}
  \item  $\cN$ is \textbf{reaction-identifiable} w.r.t. its SDE, if for every non-empty open bounded set $U$, for all initial states $x\in U$ and distinct rate constant vectors $\k, \k'\in\R_{>0}^d$, the corresponding stochastic processes, when stopped upon reaching the boundary of $U$, have different distributions starting from $x$.

  \item   $\cN$ and $\cN'$ are \textbf{unconfoundable} w.r.t. their SDEs, if for every non-empty open bounded set $U$, for all initial states $x\in U$ and rate constant vectors $\k\in \R_{>0}^d$ and $\k'\in \R_{>0}^{d'}$ , the corresponding stochastic processes, when stopped upon reaching the boundary of $U$, have different distributions starting from $x$.
\end{itemize}
    
 \end{definition}

The next theorem establishes sufficient and necessary conditions for the identifiability of RNs w.r.t. their SDEs.
 The proof is provided in Subsection \ref{prf_weak_react_id}.

\begin{theorem}\label{weak_react_id}
Let $\cN = (\cS, \cC, \cR)$ and $\cN' = (\cS, \cC', \cR')$ be two RNs with the same set of species, where $\cR \neq \cR'$.

\begin{enumerate}
\item $\cN$ is reaction-identifiable w.r.t. its SDE if and only if for every source complex $y\in \cC$, the set of vectors 
\begin{equation}\label{generalized_reaction_vectors}
    \left\{\left(y'-y,(y' - y)\cdot (y' - y)^{\intercal}\right) \big| \, y\to y'\in\cR\right\}
\end{equation}
is linearly independent.

\item $\cN$ and $\cN'$ are unconfoundable w.r.t. their SDEs if and only if they have different source complexes or share the same source complexes and for every source complex $y \in \cC$, ${\rm Cone}_\cR(y) \cap {\rm Cone}_{\cR'}(y) = \emptyset$,
where, 
\begin{align}\label{def_cone}
{\rm Cone}_\cR(y)\coloneqq \left\{\sum_{y\to y'\in \cR}\alpha_{y\to y'}\left((y' - y),(y' - y)\cdot(y' - y)^{\intercal}\right)\bigg|\, \alpha_{y\to y'}>0\right\} \subseteq \R^n \times \R^n \otimes \R^n.
\end{align}
\end{enumerate}

\end{theorem}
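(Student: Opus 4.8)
The plan is to reduce the statement about laws of stopped processes to a purely algebraic statement about the coefficient functions $A$ and $B$, and then to carry out the resulting linear algebra and convex geometry source complex by source complex. First I would use the local well-posedness of \eqref{Ito_sde} up to the exit time of a bounded open set $U$ (so that the stopped processes are well defined) together with the generator reformulation developed later in the paper: the law of the stopped diffusion is determined by, and determines, its generator, which in turn is determined by the pair $(A,B)$ of \eqref{drift_vector}--\eqref{diffusion_matrix}. Hence reaction-identifiability (resp.\ unconfoundability) w.r.t.\ the SDE is equivalent to the corresponding statement for the parametrisation $\kappa\mapsto(A,B)$, viewed as a pair of functions on $U$; since $A$ and $B$ are polynomials and $U$ is open, equality of these functions is equivalent to equality of all their coefficients.

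The key structural observation is that, grouping reactions by their source complex and using that the monomials $\{x^{y}\}_{y\in\cC}$ are linearly independent on any open set, the pair $(A,B)$ is equivalent to the family of ``generalized moments''
\[
M(y)=\sum_{y\to y'\in\cR}\kappa_{y\to y'}\bigl((y'-y),(y'-y)\cdot(y'-y)^{\intercal}\bigr),\qquad y\in\cC,
\]
one per source complex. For part (1), the map $\kappa\mapsto(M(y))_{y}$ is linear and splits as a direct sum over source complexes, so it is injective (i.e.\ $\cN$ is reaction-identifiable) precisely when each summand is injective; a linear map whose columns are the generalized reaction vectors is injective exactly when those vectors are linearly independent, which is the condition in \eqref{generalized_reaction_vectors} for every source complex $y$.

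For part (2), the two networks admit a common law iff $M_{\cN}(y)=M_{\cN'}(y)$ for every source complex $y$. If their source complexes differ, pick $y$ lying in one network only: there $M(y)$ is a nonzero element on one side and $0$ on the other — nonzero because its diffusion component $\sum_{y\to y'}\kappa_{y\to y'}(y'-y)\cdot(y'-y)^{\intercal}$ is a positive combination of nonzero rank-one positive semidefinite matrices — so no common law exists and the networks are unconfoundable. If the source complexes coincide, then as $\kappa$ ranges over $\R_{>0}^{d}$ the attainable values of $M(y)$ fill exactly ${\rm Cone}_{\cR}(y)$ of \eqref{def_cone}, and likewise for $\cN'$; since the rate constants attached to distinct source complexes are independent parameters, a common law can be produced iff a matching can be produced at each source complex separately, i.e.\ iff ${\rm Cone}_{\cR}(y)\cap{\rm Cone}_{\cR'}(y)\neq\emptyset$ for every $y$. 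Unconfoundability is then the failure of this simultaneous matching, governed by the cones of \eqref{def_cone}.

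The step I expect to be the main obstacle is the first reduction — justifying rigorously that the law of the process stopped at $\partial U$ pins down the differential operator $(A,B)$, and conversely — since it rests on the local well-posedness theory for \eqref{Ito_sde} and on identifying the generator with the drift/diffusion pair; once this is in hand, the decoupling by source complex and the passage from linear independence (part 1) to cone intersection under the positivity constraint $\kappa>0$ (part 2) are routine. A secondary point requiring care is the bookkeeping that assembles the per-source-complex criteria into the global statement, together with the elementary but essential observation that a genuine reaction vector $y'-y$ never vanishes, which is what makes the differing-source-complex case automatic.
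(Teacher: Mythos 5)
Your proposal is correct and follows essentially the same route as the paper: reduce the law of the stopped process to the generator (via local well-posedness and the martingale-problem equivalence), identify the generator with the polynomial pair $(A,B)$, decouple by source complex using linear independence of the monomials $x^y$ on an open set, and then read off linear independence of the generalized reaction vectors for part (1) and the cone-intersection condition for part (2), including the observation that a nonzero positive combination of rank-one PSD matrices forces coinciding source complexes. The paper packages these steps as Theorems \ref{theorem1} and \ref{confoundability} plus a proposition equating SDE-level and generator-level identifiability, but the content matches yours.
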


\begin{example}\cite[Fig. 3]{ident_CRNs}\label{e.ident_diff}
    Consider the following two RNs, where the first RN has reaction set $\mathcal{R}$ and the second has reaction set $\mathcal{R}'$:
    \begin{center}
\begin{tikzpicture}
  \matrix (m) [matrix of math nodes, row sep=2.5ex]{
    & \ce{$A_1 + A_2$}  &\\
    &  & & & \\
    & \ce{$A_0$} &  \\
    \ce{$2 A_1$} & & \ce{$2 A_3$,}\\
  };
  \draw[->] (m-3-2) -- (m-1-2.south)
    node[right,pos=0.5,sloped,font=\footnotesize,rotate=270] {\ce{$2/9$}};

  \draw[->] (m-3-2) -- (m-4-1.north)
    node[above,pos=0.5,sloped,font=\footnotesize] {\ce{$1/6$}};
  \draw[->] (m-3-2) -- (m-4-3.north)
    node[above,pos=0.5,sloped,font=\footnotesize] {\ce{$11/18$}};

\end{tikzpicture}
\hspace{5em}
\begin{tikzpicture}
  \matrix (m) [matrix of math nodes, row sep= 2.5ex]{
    & \ce{$A_1 + A_3$}  &\\
    &  & & & \\
    & \ce{$A_0$} &  \\
    \ce{$2 A_2$} & & \ce{$2 A_3$.}\\
  };
  \draw[->] (m-3-2) -- (m-1-2.south)
    node[right,pos=0.5,sloped,font=\footnotesize,rotate=270] {\ce{$5/9$}};

  \draw[->] (m-3-2) -- (m-4-1.north)
    node[above,pos=0.5,sloped,font=\footnotesize] {\ce{$1/9$}};
  \draw[->] (m-3-2) -- (m-4-3.north)
    node[above,pos=0.5,sloped,font=\footnotesize] {\ce{$1/3$}};

\end{tikzpicture}
\end{center}
As shown in \cite{craciun}, these RNs are confoundable w.r.t. their ODEs. 
However, one finds that
    \begin{equation*}
        {\rm Cone}_\mathcal{R}(A_0) = \left\{\left(\begin{pmatrix}
            -\alpha_1-\alpha_2 -\alpha_3 \\
            \alpha_1+2\alpha_2\\
            \alpha_1\\
            2\alpha_3
        \end{pmatrix},\begin{pmatrix}
\alpha_1+\alpha_2+\alpha_3 & -\alpha_1-2 \alpha_2 & -\alpha_1  & -\alpha_3 \\
-\alpha_1-2\alpha_2   & \alpha_1+4\alpha_2 & \alpha_1  & 0\\
 -\alpha_1 & \alpha_1  &  \alpha_1& 0 \\
 -2\alpha_3 & 0 & 0 & 4 \alpha_3
\end{pmatrix}\right) \bigg|\, \alpha_1, \alpha_2, \alpha_3 >0 \right\} ,
    \end{equation*}
    while
    \begin{equation*}
        {\rm Cone}_\mathcal{R'}(A_0) = \left\{\left(\begin{pmatrix}
            -\alpha_1-\alpha_2 -\alpha_3 \\
            \alpha_1\\
            2\alpha_2\\
            \alpha_1+2\alpha_3
        \end{pmatrix},\begin{pmatrix}
\alpha_1+\alpha_2+\alpha_3 & -\alpha_1 & -2\alpha_2 & -\alpha_1-2\alpha_2\\
-\alpha_1 & \alpha_1& 0 & \alpha_1 \\
 -2\alpha_2 & 0  &  4\alpha_2& 0 \\
 -\alpha_1-2\alpha_2 & \alpha_1& 0 & \alpha_2+4\alpha_3
\end{pmatrix}\right) \bigg| \, \alpha_1,\alpha_2, \alpha_3 >0  \right\} .
    \end{equation*}
    It is easy to verify that ${\rm Cone}_\mathcal{R}(A_0) \cap {\rm Cone}_\mathcal{R'}(A_0) = \emptyset$, since some coefficients are zero in one of the above matrices but positive in the other.
    As a result of Theorem \ref{weak_react_id}, these two RNs are unconfoundable w.r.t. their SDEs.
\end{example}
\begin{remark}
\label{rem:nontrivial}
For a reaction network with mass-action kinetics, the rate constants associated
with a source complex $y\in \cC$ enter the CLE in both the drift and the diffusion terms,
\[
A(x)
= \sum_{y\in \mathcal{C}}\sum_{y\to y'\in R} (y'-y)\,\kappa_{y\to y'}\, x^y,
\qquad
B(x)
= \sum_{y\in \mathcal{C}}\sum_{y\to y'\in R} (y'-y)\cdot(y'-y)^{\intercal}\,\kappa_{y\to y'}\, x^y.
\]

Hence, for each source complex $y\in C$, the rate constants appear only through the
finite family
\[
\bigl( y'-y,\,(y'-y)\cdot(y'-y)^{\intercal} \bigr),
\qquad y\to y'\in \mathcal{R}.
\]
Theorem~\ref{weak_react_id} shows that reaction-identifiability of the SDE is
\emph{equivalent} to the linear independence of this family.

On the other hand, in the deterministic ODE setting, identifiability depends solely on the reaction
vectors $y'-y$~\cite{craciun2}. In contrast, in the SDE
setting the diffusion matrix contributes additional constraints through the terms
$(y'-y)\cdot(y'-y)^{\intercal}$, so that identifiability relies on the combined information from both drift and diffusion.

For a general parametric SDE that does not arise from a reaction network, one may still ask whether the map $\theta \mapsto (A_\theta,B_\theta)$ is injective. However, one cannot typically expect the existence of a canonical finite family of vectors whose linear independence characterises identifiability \cite{SDE_inference}. The mass-action parametrisation of reaction networks is therefore crucial for the simple linear-algebraic criterion established in Theorem~\ref{weak_react_id}.
\end{remark}

\subsection{Linear conjugacy}
In the context of RNs, the concept of linear conjugacy was introduced in \cite{johnston2011linear} for deterministically modeled RNs, borrowing ideas from dynamical systems theory \cite{perko2001differential}. This notion was further developed in subsequent works \cite{johnston2012linear, johnston2013computing, nazareno2019linear}, among others. Earlier work on conjugacy in detailed balanced RNs can be found in \cite{krambeck1970mathematical}. Intuitively, two RNs are said to be linearly conjugated if there exists a linear mapping that transforms the trajectories of one system into those of the other. This property is of particular interest because qualitative features of mass-action systems, such as local stability, multistability, or persistence, are preserved under linear conjugacy.
Our goal is to adapt the concept of linear conjugacy and extend the results of \cite{johnston2011linear} to the stochastic setting.

\begin{definition}
 Let $(\cN, \k)$ and $(\cN', \k')$ be two RNs with mass-action kinetics, that share a common set of species and $\cR \neq \cR'$.
    \begin{itemize}
      \item $(\cN, \k)$ and $(\cN', \k')$ are \textbf{$C^k$-conjugated} w.r.t. their SDEs, if there exists a $C^k$-diffeomorphism $h \colon \R_{>0}^n \to \R_{> 0}^n$, such that for any bounded open set $U \subseteq \R_{> 0}^n$, and every $x \in U$,
      \begin{align}\label{linear_map}
      h(X_{t \wedge \tau}(x)) \overset{d}{=} X'_{t \wedge \tau'} (h(x)), 
      \end{align}
      where $X_{t \wedge \tau}  (x)$ and $X_{t\wedge \tau'}' (h(x))$ denotes the solution to the LAs for $(\cN, \k)$ and $(\cN',\k')$ starting at $x$ and $h(x)$ respectively, up to stopping times defined as
      \begin{align}\label{def_tau-tau'}
        \tau \coloneqq \inf \{t > 0, X_t (x) \notin U\}\quad \text{and} \quad \tau' \coloneqq \inf \big\{t > 0, X_t'(h(x)) \notin h(U)\big\}.
        \end{align}

        \item $(\cN, \k)$ and $(\cN', \k')$  are \textbf{linearly-conjugated} w.r.t. their SDEs, if they are $C^{\infty}$-conjugated w.r.t. their SDEs and the diffeomorphism $h$ is linear.
    \end{itemize}
     
\end{definition}

Analogous to \cite{johnston2011linear}, the following theorem provides a necessary and sufficient condition for the linear conjugacy of two distinct RNs that share the same set of species. The proof is deferred to Subsection~\ref{prf_linear-conj}.
\begin{theorem}\label{thm_linear-conj}
  Let $\cN$ and $\cN'$ be two RNs, that share a common set of species and $\cR \neq \cR'$. Then, there exist rate constant vectors $\k$ and $\k'$ such that $(\cN, \k)$ and $(\cN', \k')$ are linear{ly} conjugate{d} w.r.t. their SDEs if and only if there exists a matrix $G$ of the form $G = DP$, where $D$ is a
positive diagonal matrix and $P$ is a permutation matrix, such that $\mathcal{N}$ and $\mathcal{N}'$ have the same complexes up to permutation by P,
and such that for every source complex $y$ of $\mathcal{N}$ it holds that:
  \[
    {\rm Cone}_{\cR} (y) \cap  {\rm Cone}^G_{\cR'} (y) \neq \emptyset,
  \] 
  where ${\rm Cone}_{\cR}$ is given by \eqref{def_cone}, and
  \[
    {\rm Cone}^G_{\cR'} (y) \coloneqq \left\{\sum_{y\to y'\in \cR'}\alpha_{y\to y'}  \left(G(y' - y),G (y' - y) \cdot (y' - y)^{\intercal} G^{\intercal}\right)\bigg|\, \alpha_{y\to y'}>0\right\} .
  \]
\end{theorem}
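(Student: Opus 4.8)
The plan is to reduce the linear conjugacy statement for SDEs to a statement about generators, and then to translate the generator condition into the stated cone condition via a change of variables. The starting point is Theorem~\ref{weak_react_id} together with the principle, invoked already in Example~\ref{1d_non_ident_generator_intro} and developed in Section~5, that for these regular-enough diffusions the law is determined by the generator, i.e.\ by the pair $(A,B)$ of drift vector and diffusion matrix. Thus the distributional identity \eqref{linear_map}, $h(X_{t\wedge\tau}(x)) \overset{d}{=} X'_{t\wedge\tau'}(h(x))$, should be recast as the requirement that the generator of the $h$-pushforward of the first diffusion coincides with the generator of the second diffusion on $h(U)$. So first I would compute how the generator transforms under a $C^\infty$-diffeomorphism $h$. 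If $\mathcal{L}$ has drift $A$ and diffusion $B=\sigma\sigma^{\intercal}$, then by It\^o's formula the process $h(X_t)$ is again a diffusion whose drift and diffusion coefficients are given by the standard pushforward formulas involving $Dh$ and $D^2 h$: the new diffusion matrix is $Dh(x)\,B(x)\,Dh(x)^{\intercal}$ and the new drift is $Dh(x)\,A(x) + \tfrac12\,\mathrm{tr}\!\big(D^2h(x)\,B(x)\big)$ (componentwise). The conjugacy condition then says these, evaluated at $x$, equal $A'(h(x))$ and $B'(h(x))$.

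Next I would impose that $h$ be \emph{linear}, $h(x)=Gx$ with $G$ invertible, which kills all second-derivative terms: $Dh\equiv G$ and $D^2h\equiv 0$. The conjugacy equations collapse to the clean pair $G\,A(x)=A'(Gx)$ and $G\,B(x)\,G^{\intercal}=B'(Gx)$ for all $x$ in the orthant. Now I would substitute the explicit mass-action forms \eqref{drift_vector} and \eqref{diffusion_matrix}. The drift and diffusion of $\mathcal{N}$ are sums over reactions of $\kappa_{y\to y'}x^{y}\,(y'-y)$ and $\kappa_{y\to y'}x^{y}\,(y'-y)(y'-y)^{\intercal}$ respectively; applying $G$ on the left (and $G^{\intercal}$ on the right in the diffusion case) multiplies each reaction term by the transformed data $\big(G(y'-y),\,G(y'-y)(y'-y)^{\intercal}G^{\intercal}\big)$, which is exactly the generator of ${\rm Cone}^G_{\cR'}$. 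The matching with $A'(Gx)$ and $B'(Gx)$ forces a comparison of two families of monomials $x^{y}$ on the left against monomials $(Gx)^{y''}$ on the right. For these polynomial identities to hold for all $x$, the monomial exponents must match up, which is precisely where the requirement that $\mathcal{N}$ and $\mathcal{N}'$ have the same complexes up to a permutation $P$ enters, and it pins down $G=DP$ with $D$ positive diagonal (positivity and diagonality being needed so that $G$ maps the positive orthant to itself and so that $(Gx)^{y''}$ remains a monomial in $x$ matching some $x^{y}$). Here I expect the source complexes of the two networks to be identified via $P$, so that the matching is organized source-complex by source-complex.

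With the monomial bookkeeping done, for each source complex $y$ the equality of the coefficient of $x^{y}$ on both sides becomes a finite-dimensional statement: the $\kappa$-weighted combination of the transformed vectors $\big(G(y'-y),G(y'-y)(y'-y)^{\intercal}G^{\intercal}\big)$ over reactions $y\to y'\in\cR'$ must equal the $\kappa$-weighted combination of $\big((y'-y),(y'-y)(y'-y)^{\intercal}\big)$ over reactions $y\to y'\in\cR$. Because the rate constants $\kappa$ and $\kappa'$ range over all positive vectors, the existence of such matching rate constants is \emph{exactly} the statement that the two open cones ${\rm Cone}_{\cR}(y)$ and ${\rm Cone}^G_{\cR'}(y)$ intersect: a common point is a single vector expressible as a strictly positive combination from each family. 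This gives both directions at once: given the cone intersection one reads off compatible positive $\kappa,\kappa'$ and defines $h(x)=Gx$; conversely a linear conjugacy yields, at each source complex, a common cone point. I would run the argument cleanly in both directions, taking care that the ``for all $x\in U$ and all bounded open $U$'' quantifiers in the definition are handled by noting that a polynomial identity valid on any open set holds on all of $\R^n_{>0}$, so the pointwise generator equations are genuinely global.

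The main obstacle I anticipate is the monomial-matching step that produces $G=DP$ and the same-complexes-up-to-permutation conclusion. It is tempting to treat this as routine, but one must argue rigorously that the only invertible linear maps $G$ for which $\{(Gx)^{y''}\}$ coincides (as a set of functions of $x$) with a set of genuine monomials $\{x^{y}\}$ are the monomial matrices $DP$; this is essentially the classical fact that a linear change of variables preserving the monomial structure must be a scaled permutation, and positivity of $D$ comes from the requirement that $h$ be a diffeomorphism of the positive orthant onto itself. A secondary subtlety is justifying the ``generator determines the law'' step at the level of the stopped processes on bounded $U$, and checking that the pushforward of a stopped diffusion under the linear map $h$ is the diffusion stopped at the correspondingly transformed exit time $\tau'$ on $h(U)$; this should follow from the uniqueness theory for the localized SDEs recalled in Subsection~\ref{SDEs_theory}, but the boundary-stopping alignment via $h(U)$ needs to be stated carefully.
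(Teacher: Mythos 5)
Your proposal is correct and follows essentially the same route as the paper: reduce the distributional identity to equality of generators on bounded open sets, compute that a linear map $h(x)=Gx$ transforms the generator's coefficients into $GA(x)$ and $GB(x)G^{\intercal}$, and then match monomials source-complex by source-complex to obtain both the $G=DP$ structure with the complex correspondence and the cone-intersection condition. The only cosmetic difference is that the paper outsources the ``$G$ must be a scaled permutation'' step to Lemma~3.1 of \cite{johnston2011linear} rather than arguing it directly from monomial matching as you sketch.
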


Suppose the matrix $G$ in Theorem \ref{thm_linear-conj} consists of only positive scaling. Then, $G$ is a positive diagonal matrix, with diagonal entries $\{c_1, \dots, c_n\} \subseteq \R_{> 0}$. The following proposition describes the relationship between the rate constant vectors of linearly conjugate RNs w.r.t. their corresponding SDEs. The proof is presented in Subsection \ref{prf_linear-conj-rate}.

\begin{proposition}\label{prop_linear-conj-rate}
    Let $\cN = (\cS, \cC, \cR)$ and $\cN' = (\cS, \cC', \cR')$ be two RNs with the same set of species and source complexes. Suppose that for the rate constant vector $\k \in \R_{> 0}^d$, there exists a positive vector $\beta \in \R_{> 0}^{d'}$ and a positive diagonal matrix $G$ with diagonal entries $\{c_1, \dots, c_n\} \subseteq \R_{> 0}$, such that for every source complex $y$:
    \begin{align}\label{eq_linear-conj-1}
    \sum_{y' \colon y\to y'\in \cR}\k_{y\to y'}(y' - y) =\sum_{y' \colon y\to y'\in \cR'}\beta_{y\to y'} G (y' - y) 
    \end{align}
    and 
       \begin{align}\label{eq_linear-conj-2}
       \sum_{y' \colon y\to y'\in \cR}\k_{y\to y'}(y' - y) \cdot (y' - y)^{\intercal} = \sum_{y' \colon y\to y'\in \cR'} \beta_{y\to y'} G (y' - y) \cdot (y' - y)^{\intercal} G^{\intercal}.  
       \end{align}
      Then $(\cN, \k')$ and $(\cN{{'}}, \k')$ are linearly conjugated w.r.t. their SDEs, where for all $y \to y' \in \cR'$,
       \begin{equation}\label{rates_conjugated_RN}
           \k'_{y\to y'} \coloneqq \beta_{y\to y'} c^y = \beta_{y\to y'}  \prod_{j=1}^n c_j^{y_{j}}.
       \end{equation}
       
\end{proposition}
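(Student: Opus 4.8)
The plan is to verify directly that the linear diffeomorphism $h(x)=G^{-1}x$ conjugates the two diffusions, using that for an It\^o SDE the law of the process stopped upon leaving a bounded open set is determined by its generator, equivalently by its drift vector and diffusion matrix (cf.\ Section~\ref{identif_generator}). Since $h$ is linear, if $X$ solves the LA for $(\cN,\k)$ then $Y_t:=G^{-1}X_t$ satisfies, by the change-of-variables formula for It\^o processes, $dY_t=G^{-1}A(GY_t)\,dt+G^{-1}\sigma(GY_t)\,dW_t$; hence $Y$ is a diffusion with drift $\bar A(x)=G^{-1}A(Gx)$ and diffusion matrix $\bar B(x)=G^{-1}B(Gx)(G^{-1})^{\intercal}=G^{-1}B(Gx)G^{-1}$, the last equality because $G$ is diagonal. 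It therefore suffices to prove $\bar A=A'$ and $\bar B=B'$, where $A',B'$ denote the drift and diffusion matrix of the LA for $(\cN',\k')$ with $\k'$ given by \eqref{rates_conjugated_RN}.

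To this end I would organise the drift and diffusion by source complex. Writing $A(x)=\sum_{y\in\cC}a_y\,x^{y}$ and $B(x)=\sum_{y\in\cC}Q_y\,x^{y}$ with $a_y:=\sum_{y\to y'\in\cR}\k_{y\to y'}(y'-y)$ and $Q_y:=\sum_{y\to y'\in\cR}\k_{y\to y'}(y'-y)\cdot(y'-y)^{\intercal}$, and using that $G=\mathrm{diag}(c_1,\dots,c_n)$ gives $(Gx)^{y}=c^{y}\,x^{y}$ with $c^{y}=\prod_{j=1}^n c_j^{y_j}$, I obtain $\bar A(x)=\sum_{y\in\cC}c^{y}G^{-1}a_y\,x^{y}$ and $\bar B(x)=\sum_{y\in\cC}c^{y}G^{-1}Q_yG^{-1}\,x^{y}$. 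Since $\cN$ and $\cN'$ share the same source complexes and the monomials $\{x^{y}\}_{y\in\cC}$ are linearly independent as functions on $\R_{>0}^n$, the identities $\bar A=A'$ and $\bar B=B'$ reduce, source complex by source complex, to $c^{y}G^{-1}a_y=a'_y$ and $c^{y}G^{-1}Q_yG^{-1}=Q'_y$, where $a'_y,Q'_y$ are the analogous coefficients for $(\cN',\k')$.

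These reduced identities are precisely what the hypotheses deliver. Setting $b_y:=\sum_{y\to y'\in\cR'}\beta_{y\to y'}(y'-y)$ and $R_y:=\sum_{y\to y'\in\cR'}\beta_{y\to y'}(y'-y)\cdot(y'-y)^{\intercal}$, equation \eqref{eq_linear-conj-1} reads $a_y=Gb_y$ and equation \eqref{eq_linear-conj-2} reads $Q_y=GR_yG^{\intercal}=GR_yG$, so that $c^{y}G^{-1}a_y=c^{y}b_y$ and $c^{y}G^{-1}Q_yG^{-1}=c^{y}R_y$. On the other hand, the definition $\k'_{y\to y'}=\beta_{y\to y'}c^{y}$ yields exactly $a'_y=c^{y}b_y$ and $Q'_y=c^{y}R_y$; hence both identities hold. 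Note that $\k'>0$, since $\beta>0$ and $c^{y}=\prod_j c_j^{y_j}>0$, so $\k'$ is a legitimate rate constant vector. Finally, because $h$ is a homeomorphism, trajectories of $X$ remain in $U$ if and only if their images under $h$ remain in $h(U)$, so the exit time $\tau$ of $X$ from $U$ coincides with the exit time of $Y=h(X)$ from $h(U)$; as $Y$ and $X'$ share the same drift and diffusion matrix, the generator-determines-law principle yields $h(X_{t\wedge\tau}(x))\overset{d}{=}X'_{t\wedge\tau'}(h(x))$ for every bounded open $U\subseteq\R_{>0}^n$ and every $x\in U$, which is the claimed linear conjugacy.

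I expect the main obstacle to be the rigorous justification of this last step, namely that coincidence of the drift vector and diffusion matrix forces equality of the finite-dimensional distributions of the two processes stopped at the boundary of $h(U)$, together with the correct handling of the stopping times $\tau,\tau'$ under the map $h$; these rely on the well-posedness up to exit times and the generator-based results of Section~\ref{identif_generator}. By contrast, the monomial-matching---though the computational core---is routine once the It\^o change of variables and the linear independence of the monomials $\{x^{y}\}_{y\in\cC}$ are in hand.
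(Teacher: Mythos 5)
Your proposal is correct and follows essentially the same route as the paper: the paper likewise takes the linear map $h(x)=G^{-1}x$, applies It\^o's formula to identify the generator (equivalently, the drift and diffusion matrix) of $h(X_{t\wedge\tau})$, matches coefficients source complex by source complex using \eqref{eq_linear-conj-1}--\eqref{eq_linear-conj-2} and the identity $(Gx)^y=c^y x^y$, and concludes by uniqueness in law up to the exit time. Your phrasing in terms of the transformed SDE's coefficients rather than the generator acting on test functions is only a cosmetic difference.
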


\begin{remark}
Note that if we impose $G=Id$ the identity map, two different RNs $(\cN, \k)$ and $(\cN',\k')$ are linearly conjugated with scaling matrix $G = Id$ only if $\cN$ and $\cN'$ are confoundable w.r.t. their  SDEs.
\end{remark}

\begin{example}
   Consider the RNs:
$$\mathcal{N}:\quad S_1\xrightarrow{\alpha}  3S_1 \quad \text{and  } \quad\mathcal{N'}: \quad S_1\xrightarrow{\alpha'}  2S_1 .$$

Following Theorem \ref{weak_react_id}
these RNs  are not confoundable w.r.t. their SDEs
 since we cannot find $\alpha>0$ and $\alpha'>0$ such that:
\begin{equation*}
    2\alpha =  \alpha' \quad \text{and  } \quad  4\alpha =  \alpha' .
\end{equation*}
In order for ($\mathcal{N},\alpha$) and ($\mathcal{N'},\alpha'$) to be linearly-conjugated, we need to find $\alpha>0, \alpha'>0$ and $c_1>0$ such that \ref{eq_linear-conj-1} and \ref{eq_linear-conj-2} are verified i.e.: 
\begin{equation*}
    2\alpha =  c_1 \alpha' \quad \text{and  } \quad  4\alpha =  c_1^2 \alpha' .
\end{equation*}
It is verified for $\alpha = \alpha'$ and $c_1=2$. It follows that ($\mathcal{N},\alpha$) and ($\mathcal{N'},\alpha'$) are linearly-conjugated. 
\end{example}
Similarly as for reaction-identifiability and confoundability ({see Section~\ref{identif_generator}}) , one can easily obtain the following lemma.
\begin{lemma}

If two RNs are not linearly-conjugated w.r.t. their ODEs (see Definitions 3.1 and 3.2 of \cite{johnston2011linear}), then they are not linearly-conjugated w.r.t. their SDEs.
\end{lemma}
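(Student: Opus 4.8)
The plan is to prove the contrapositive: if $\cN$ and $\cN'$ are linearly-conjugated w.r.t.\ their SDEs, then they are linearly-conjugated w.r.t.\ their ODEs. By Theorem~\ref{thm_linear-conj}, SDE linear conjugacy is equivalent to the existence of a matrix $G = DP$ (with $D$ a positive diagonal matrix and $P$ a permutation matrix) such that the complexes of $\cN$ and $\cN'$ agree up to $P$ and, for every source complex $y$ of $\cN$,
\[
{\rm Cone}_{\cR}(y) \cap {\rm Cone}^G_{\cR'}(y) \neq \emptyset ,
\]
where both cones live in $\R^n \times \R^n \otimes \R^n$. The corresponding ODE characterization of \cite{johnston2011linear} imposes exactly the same structural requirement on $G$ and on the complexes, but its relevant cones live in $\R^n$ and are built only from the reaction vectors $y'-y$ (and their $G$-images), without the second-order terms $(y'-y)\cdot(y'-y)^{\intercal}$. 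Hence the task reduces to showing that the SDE cone condition forces its ODE analogue with the same $G$.

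The core step is a projection argument. Let $\pi \colon \R^n \times \R^n \otimes \R^n \to \R^n$ be the linear projection onto the first factor, $\pi(u,M) = u$. Directly from \eqref{def_cone} and the definition of ${\rm Cone}^G_{\cR'}(y)$ one checks that
\[
\pi\big({\rm Cone}_{\cR}(y)\big) = \Big\{\textstyle\sum_{y\to y'\in\cR} \alpha_{y\to y'}(y'-y) : \alpha_{y\to y'} > 0\Big\}
\]
is precisely the ODE reaction cone of $\cN$ at $y$, and likewise $\pi\big({\rm Cone}^G_{\cR'}(y)\big)$ is the $G$-image of the ODE reaction cone of $\cN'$ at $y$. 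Since any point in ${\rm Cone}_{\cR}(y) \cap {\rm Cone}^G_{\cR'}(y)$ is sent by $\pi$ to a common point of the two projected cones, a nonempty SDE intersection yields a nonempty ODE intersection for every source complex $y$, with the same $G = DP$ and the same complex matching. Invoking Johnston's ODE characterization in the reverse direction then delivers ODE linear conjugacy, completing the contrapositive.

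The step that requires the most care is aligning the ODE linear-conjugacy condition of \cite{johnston2011linear} with the projected cone condition above: I would verify that, for mass-action systems, conjugacy of the drift fields $x \mapsto \sum_y x^y \sum_{y\to y'}\kappa_{y\to y'}(y'-y)$ under $z = Gx$ with $G = DP$ reduces, source-complex by source-complex, exactly to the intersection ${\rm Cone}^{\mathrm{ODE}}_{\cR}(y) \cap G\cdot{\rm Cone}^{\mathrm{ODE}}_{\cR'}(y) \neq \emptyset$, with the scaling factor $c^y$ absorbed into the positive rate constants (as in the drift relation \eqref{eq_linear-conj-1} of Proposition~\ref{prop_linear-conj-rate}). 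Once this equivalence is in place, the remainder is elementary linear algebra, and no analytic input about the SDEs is needed beyond the two already-established characterizations. This is the precise sense in which the SDE notion is finer than the ODE one: the diffusion terms $(y'-y)\cdot(y'-y)^{\intercal}$ impose extra constraints that the projection $\pi$ simply discards.
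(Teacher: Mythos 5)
Your proposal is correct and matches the paper's intended argument: the paper leaves this lemma as an easy analogue of Lemmas~\ref{conseq_lemma} and~\ref{l.unconfoundability}, namely that the SDE condition of Theorem~\ref{thm_linear-conj} refines the ODE one by the extra diffusion components $(y'-y)\cdot(y'-y)^{\intercal}$, so projecting a nonempty intersection ${\rm Cone}_{\cR}(y)\cap{\rm Cone}^G_{\cR'}(y)$ onto its first factor yields the corresponding ODE cone intersection with the same $G=DP$. Your contrapositive-plus-projection write-up, including the remark that the factor $c^y$ is absorbed into the positive coefficients, is exactly this argument spelled out.
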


\section{Investigation of some special classes of RNs concerning identifiability}
\subsection{Special cases of reaction-identifiable RNs}
\begin{enumerate}

    \item Consider RNs, which only consist of the following types of reactions: 

$$S_i\rightarrow  S_j, \quad  S_i\rightarrow \emptyset, \quad \emptyset \rightarrow S_i ,$$
with chemical species $S_i$, $i=1,...,n$. 

For each complex the associated reaction vectors are linearly independent, and it follows from \cite[Theorem 3.2]{ident_CRNs}, that their corresponding RNs always have uniquely identifiable rate constants w.r.t. their ODEs and thus their SDEs.
This property extends easily to \textbf{k-Unary Chemical Reaction Networks} \cite{laurence2025analysis}, with reactions of the types: $$k_i S_i\rightarrow k_j S_j, \quad  k_i S_i\rightarrow \emptyset, \quad \emptyset \rightarrow k_i S_i ,$$
for $1 \leq i \neq j \leq N$, where $k = (k_i)_{1\leq i \leq n} \in \mathbb{Z}^n_{>0}$.

\item Simple \textbf{RNs with only one species}, have also uniquely identifiable rate constants w.r.t. the  SDE,
 if and only if each source complex is a reactant in at most two reactions. Otherwise the associated set of vectors in (\ref{generalized_reaction_vectors}) will be constituted of three or more vectors in $\mathbb{R}^2$, and will then be linearly dependent.

\item If each reaction in a reaction network has a distinct source complex, then the corresponding reaction vectors are automatically linearly independent, which in turn guarantees identifiability w.r.t. the ODE.
\end{enumerate}

\subsection{Counterexample}
Given that RNs with complexes of molecularity of at most one are uniquely identifiable w.r.t. their SDEs,
 one might expect the same 
for RNs with \textbf{reactants of molecularity at most one}. The following examples show this is not the case.

\begin{example}\label{non_ident_generator_multi_dim}
    Consider the following RN:
        \begin{center}
\begin{tikzpicture}
  \matrix (m) [matrix of math nodes, row sep=1ex]{
    & \ce{3X + 2Y}  &\\
    &  & & & \\
    & \ce{X} &  \\
    \ce{2X + Y} & & \ce{4X + 3Y.}\\
  };
  \draw[->] (m-3-2) -- (m-1-2.south)
    node[right,pos=0.5,sloped,font=\footnotesize,rotate=270] {\ce{\kappa_2}};

  \draw[->] (m-3-2) -- (m-4-1.north)
    node[above,pos=0.5,sloped,font=\footnotesize] {\ce{\kappa_1}};
  \draw[->] (m-3-2) -- (m-4-3.north)
    node[above,pos=0.5,sloped,font=\footnotesize] {\ce{\kappa_3}};

\end{tikzpicture}
\end{center}

One can verify that the two sets of rate constant vectors $\kappa=(\kappa_1,\kappa_2,\kappa_3) = (2,7,5)$ and $\kappa' = (\kappa_1',\kappa_2',\kappa_3') = (5,4,6)$ give the same SDE. Hence this RN has not uniquely identifiable rate constants w.r.t. its SDE.

Note that if we remove the third chemical reaction namely  $X\xrightarrow{\kappa_3} 4X+3Y$, the set of vectors in (\ref{generalized_reaction_vectors}) becomes linearly independent, which makes the RN having uniquely identifiable rate constants w.r.t. the ODE and then also w.r.t. the SDE.
\end{example}

\section{Identifiability of RNs given generators}
\label{identif_generator}
The proof of our main result, Theorem \ref{weak_react_id}, relies on establishing identifiability of RNs given generators. In this section, we introduce the formal notion of identifiability of RNs given their generators and present several supporting results that culminate in the proof of Theorem \ref{weak_react_id}.

\subsection{More on SDEs}\label{SDEs_theory}
We start with introducing key concepts of SDEs relevant to this paper and refer the reader to e.g., \cite{ikeda1989stochastic,oksendal2013stochastic} for a more systematic study of the topic. 

In general, the SDE \eqref{Ito_sde} may not admit a unique solution for all $t > 0$. One issue arises from the fact that the diffusion coefficient  $\sigma (x) = \sqrt{B (x)}$ may fail to be Lipschitz continuous at the boundary of the positive orthant. In the one-dimensional case, this problem has been addressed in the seminal work \cite{yamada1971uniqueness},  which establishes the strong uniqueness for an SDE with an $\alpha$-H\"{o}lder continuous diffusion coefficient when $\alpha \geq 1/2$. However, in higher dimensions, where the equation under consideration is likely to reside, uniqueness is no longer guaranteed, as demonstrated in \cite{swart20012}. 

One possible remedy is to restrict the initial condition to lie strictly within the positive orthant. Even then, a second issue remains, that is the potential explosion of solutions in finite time. Since $\sigma (x)$ exhibits polynomial growth as $|x| \to \infty$, it may blow up in finite time. This phenomenon is well understood in the one-dimensional setting \cite{feller1954diffusion}, and has been generalised to higher dimensions in \cite{khasminskii1960ergodic}. We also note that the issue of (non-)explosion of RNs modelled by CTMCs has been investigated in \cite{non-expl}.

Nevertheless, both the drift  $A$ and the diffusion coefficient  $\sigma$ are Lipschitz continuous on compact subsets of the positive orthant and hence the following result holds, see, e.g., \cite{ikeda1989stochastic}. We begin with the standard definition of solutions to SDEs.

\begin{definition}
  A stochastic process $X = (X_t)_{t \geq 0}$ is called 

  \begin{enumerate}
    \item a \textbf{strong solution} to \eqref{Ito_sde}, if it is adapted to the filtration generated by the Brownian motion $W = (W_t)_{t \geq 0}$ in \eqref{Ito_sde}, and satisfies the following integral equation:
  \[
    X_t - X_0 = \int_0^t A(X_s) ds + \int_0^t \sigma (X_s) d W_s,
  \]
  where the stochastic integral is interpreted in the It\^o sense. A strong solution is called \textbf{unique}, if, whenever $X$ and $X'$ are any two strong solutions to \eqref{Ito_sde} such that $X_0 = X'_0$ a.s., then $X_t = X_t'$ for all $t \geq 0$, a.s. 

  \item a \textbf{weak solution} to \eqref{Ito_sde}, if there exists a probability space and a Brownian motion $\widetilde{W} = (\widetilde{W}_t)_{t \geq 0}$ with $\widetilde{W}_0 = 0$, such that $X$ is adapted to the filtration generated by $\widetilde{W}$ and 
  satisfies 
  \[
    X_t - X_0 = \int_0^t A(X_s) ds + \int_0^t \sigma (X_s) d \widetilde{W}_s,
  \]
  where the stochastic integral is interpreted in the It\^o sense. A weak solution is called \textbf{unique} (in law), if, whenever $X$ and $X'$ are any two weak solutions to \eqref{Ito_sde} such that $X_0 \overset{d}{=} X'_0$, then $X_t \overset{d}{=} X'_t$ for all $t \geq 0$. 
  \end{enumerate}

\end{definition}

{\begin{theorem}\label{uniqueness}
    If we let $U \subseteq \R_{>0}^n$ be a bounded open set and take an initial condition $x \in U$, then \eqref{Ito_sde} admits a unique strong solution up to the stopping time $\tau = \inf\{t > 0\colon X_t (x) \notin U\}$, and therefore also a unique weak solution.
\end{theorem}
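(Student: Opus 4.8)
The plan is to reduce \eqref{Ito_sde}, whose coefficients are only \emph{locally} defined and not globally Lipschitz, to the classical globally-Lipschitz setting by a localization argument, and then to patch the local solutions together up to the exit time $\tau$. Recall from the discussion preceding the statement that the drift $A$ (a vector of polynomials) and the diffusion coefficient $\sigma = \sqrt{B}$ are Lipschitz continuous on every compact subset of the open orthant $\R^n_{>0}$. The only place where this can fail is $\partial \R^n_{\geq 0}$, where $B$ may degenerate and the matrix square root is merely H\"older-$1/2$. The point of stopping at $\tau$ is that, since $U \subseteq \R^n_{>0}$ is open, it contains no point of $\partial \R^n_{\geq 0}$, so the stopped process never reaches the problematic boundary.

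\textbf{Exhaustion and truncation.} First I would introduce an exhausting sequence of open sets. For fixed $x \in U$, choose $k_0$ so large that $\mathrm{dist}(x,\partial\R^n_{\geq 0}) > 1/k_0$, and for $k \geq k_0$ set
\[
U_k \coloneqq \Big\{ y \in U : \mathrm{dist}(y, \partial \R^n_{\geq 0}) > 1/k \Big\},
\]
so that $x \in U_k$, $U_k \uparrow U$, and each closure $\overline{U_k}$ is a compact subset of $\R^n_{>0}$. On a compact neighbourhood of $\overline{U_k}$ inside $\R^n_{>0}$ the coefficients $A,\sigma$ are Lipschitz; multiplying by a smooth cutoff equal to $1$ on $\overline{U_k}$ and extending (e.g.\ by a McShane/Kirszbraun extension) yields globally Lipschitz coefficients $\widetilde{A}_k,\widetilde{\sigma}_k$ on $\R^n$ that coincide with $A,\sigma$ on $U_k$. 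The classical existence and pathwise-uniqueness theorem for globally Lipschitz (hence linearly bounded) coefficients, see \cite{ikeda1989stochastic,oksendal2013stochastic}, then gives a unique strong solution $X^{(k)}$ of the modified SDE on $[0,\infty)$.

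\textbf{Transfer and patching.} Next I would transfer this back to the original equation. Letting $\tau_k \coloneqq \inf\{t > 0 : X^{(k)}_t \notin U_k\}$, the stopped process $X^{(k)}_{t\wedge\tau_k}$ solves \eqref{Ito_sde} up to $\tau_k$, since $\widetilde{A}_k = A$ and $\widetilde{\sigma}_k = \sigma$ on $U_k$; conversely any strong solution of \eqref{Ito_sde} agrees with $X^{(k)}$ before exiting $U_k$, by pathwise uniqueness of the modified equation. The families $(X^{(k)})$ and $(\tau_k)$ are consistent on overlapping intervals, and since $U_k \uparrow U$ we have $\tau_k \leq \tau$ and $\tau_k \uparrow \tau_\infty \leq \tau$. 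The key step is $\tau_\infty = \tau$: at $\tau_k$ the process lies on $\partial U_k \subseteq \partial U \cup \{\mathrm{dist}(\cdot,\partial\R^n_{\geq 0}) = 1/k\}$, so in the limit it either has already reached $\partial U$ (whence $\tau_k = \tau$) or converges to $\partial\R^n_{\geq 0}$, a set disjoint from $U$, so that again $\tau_\infty = \tau$ by path continuity. Patching the $X^{(k)}$ then defines a strong solution up to $\tau$, unique by the uniqueness on each $U_k$.

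\textbf{Weak solution and main obstacle.} Finally, weak existence is immediate because every strong solution is a weak solution, and weak uniqueness (uniqueness in law up to $\tau$) follows from the pathwise uniqueness established above via the Yamada--Watanabe theorem, applied to each globally Lipschitz truncation and passed to the limit $\tau_k \uparrow \tau$. I expect the main obstacle to be the boundary behaviour of $\sigma$ on $\partial\R^n_{\geq 0}$: the argument must never invoke Lipschitz continuity there, and the role of the exhaustion $U_k$ together with the identity $\tau_\infty = \tau$ is precisely to confine all estimates to compact subsets of the open orthant while still exhausting the full exit time $\tau$.
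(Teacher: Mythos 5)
Your proposal is correct and takes essentially the same approach as the paper: the paper offers no written proof of Theorem~\ref{uniqueness}, justifying it only by the remark that $A$ and $\sigma$ are Lipschitz continuous on compact subsets of $\R^n_{>0}$ together with a citation to \cite{ikeda1989stochastic}, and your exhaustion--truncation--patching argument (including the sets $U_k$ that handle the case where $\overline{U}$ touches $\partial\R^n_{\geq 0}$, the identification $\tau_\infty=\tau$, and the Yamada--Watanabe step for uniqueness in law) is precisely the standard localization proof that citation stands for.
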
}
This is sufficient for our investigation of identifiability.
Moreover, the solution $(X_{t\wedge \tau}(x))_{t \geq 0}$ is a homogeneous Markov process. In the following, we introduce the concept of the \textbf{infinitesimal generator} for homogeneous Markov processes.
 \begin{definition}\cite[Definition 7.3.1]{oksendal2013stochastic} \label{def_generator-0}
Let $(X_t)_{t \geq 0}$ be a homogeneous Markov process with state space $D \subseteq \R^n$. The \textbf{infinitesimal generator} (or simply the generator) of $X$ is an operator $L$ on $C_c^{\infty} (D)$, such that for all $f \in C_c^{\infty} (D)$ and $x \in D$,
\begin{align}\label{def_generator}
  L f(x) \coloneqq \lim_{t \to 0} \frac{\E[f(X_t(x))] - f(x)}{t}.
\end{align}
For a general Markov process, the generator may not always exist—additional properties, such as strong continuity of the process, are required to ensure that the limit in \eqref{def_generator} exists. In this paper, we focus on the Langevin approximation (LA) for reaction networks (RNs), where the generator exists and takes the form in \eqref{generator}.
\end{definition}

Then, \cite[Proposition 1.7 in Chapter 4]{ethier1986markov} asserts the following theorem.
\begin{theorem}
  Let $(X_t)_{t \geq 0}$ be a Markov process with generator $L$. Then, it solves the \textbf{martingale problem} for $L$, namely, for every $f \in C_c^{\infty} (\R^d)$,
  \[
    M_t \coloneqq f(X_t) - \int_0^t L f(X_s) ds
  \]
  is a martingale adapted to the filtration generated by $(X_t)_{t \geq 0}$.
\end{theorem}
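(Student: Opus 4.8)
The plan is to exploit the explicit SDE structure established in Theorem~\ref{uniqueness} rather than treating $X$ as an abstract Markov process, and to identify $M$ with an It\^o stochastic integral. Fix a bounded open set $U\subseteq\R^n_{>0}$, an initial point $x\in U$, and write $X=(X_{t\wedge\tau}(x))_{t\ge0}$ for the unique solution to \eqref{Ito_sde} stopped at $\tau=\inf\{t>0\colon X_t(x)\notin U\}$. Recall that on $C_c^\infty$ the generator acts as the second-order differential operator $Lf(z)=A(z)\cdot\nabla f(z)+\tfrac12\sum_{i,j}B_{i,j}(z)\,\partial_i\partial_j f(z)$, with $A$ and $B=\sigma\sigma^{\intercal}$ as in \eqref{drift_vector}--\eqref{diffusion_matrix}.

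First I would apply It\^o's formula to $f(X_{t\wedge\tau})$ for $f\in C_c^\infty$. Since $A$ and $\sigma$ are smooth on the positive orthant and $f$ is $C^2$, the drift term and the second-order trace term combine exactly into $\int Lf(X_s)\,ds$, yielding
\[
M_{t\wedge\tau}=f(X_{t\wedge\tau})-\int_0^{t\wedge\tau} Lf(X_s)\,ds
= f(X_0)+\int_0^{t\wedge\tau}\nabla f(X_s)^{\intercal}\sigma(X_s)\,dW_s .
\]
Thus $M$ equals a constant plus an It\^o integral, hence is at least a local martingale with respect to the Brownian filtration $(\mathcal F_t^W)$; integrability of $M_t$ is immediate since $f$ is bounded and $Lf$ is bounded on the closure of $U$.

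Next I would upgrade this local martingale to a genuine martingale, which is where the stopping at $\tau$ is essential. On $[0,\tau]$ the process remains in the closure of the bounded set $U$, on which $\sigma$ is continuous and hence bounded, while $\nabla f$ is bounded because $f$ has compact support. Therefore the integrand $\nabla f(X_s)^{\intercal}\sigma(X_s)$ is uniformly bounded, so that $\E\int_0^{t\wedge\tau}\lvert\nabla f(X_s)^{\intercal}\sigma(X_s)\rvert^2\,ds<\infty$; by the It\^o isometry $M$ is an $L^2$-bounded, and in particular a true, martingale with respect to $(\mathcal F_t^W)$.

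Finally I would transfer the martingale property to the filtration $(\mathcal F_t^X)$ generated by $X$. Since $X$ is a strong solution it is adapted to $(\mathcal F_t^W)$, so $\mathcal F_t^X\subseteq\mathcal F_t^W$, and $M_t$ is a measurable functional of the path $(X_s)_{s\le t}$, hence $\mathcal F_t^X$-measurable. For $s\le t$ the tower property then gives $\E[M_t\mid\mathcal F_s^X]=\E\big[\E[M_t\mid\mathcal F_s^W]\mid\mathcal F_s^X\big]=\E[M_s\mid\mathcal F_s^X]=M_s$, which is the claim. The main obstacle is the local-to-true-martingale upgrade in the third step: without stopping at $\tau$ the coefficients have only polynomial growth and $M$ need not be a true martingale, so one must carefully combine boundedness of $U$ with the compact support of $f$ to control the integrand. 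An alternative, fully abstract route avoids It\^o's formula and instead uses the Markov property to reduce the assertion to the integrated Kolmogorov equation $T_r f-f=\int_0^r T_u Lf\,du$ for the transition semigroup $T_r g(z)=\E[g(X_r(z))]$, where the analogous difficulty is justifying differentiation of $r\mapsto T_r f$ via strong continuity of the semigroup.
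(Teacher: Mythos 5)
Your argument is correct, but it takes a genuinely different route from the paper, which offers no proof of its own and instead cites \cite[Proposition 1.7, Chapter 4]{ethier1986markov}; that reference establishes the statement for an abstract Markov process via the semigroup identity $T_tf-f=\int_0^t T_sLf\,ds$ and the Markov property --- precisely the ``fully abstract route'' you sketch in your last sentence. What you do instead is specialise to the situation the paper actually uses: you take $X$ to be the stopped strong solution of \eqref{Ito_sde} from Theorem~\ref{uniqueness}, apply It\^o's formula to $f(X_{t\wedge\tau})$ so that $M_{t\wedge\tau}$ is identified with an It\^o integral, and then use the boundedness of $\nabla f^{\intercal}\sigma$ on $\overline{U}$ together with the It\^o isometry and the tower property to get a true martingale for the filtration generated by $X$. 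This buys concreteness and sidesteps a subtlety the paper glosses over: the generator in Definition~\ref{def_generator-0} is defined only as a pointwise limit, and for a completely general Markov process that pointwise definition does not by itself yield the martingale property --- one needs $f$ to lie in the domain of the strong (sup-norm) generator of a strongly continuous semigroup, which is the hypothesis under which the Ethier--Kurtz result operates. The price is generality: your It\^o argument proves the theorem only for diffusions of the form \eqref{Ito_sde} (stopped at $\tau$), not for an arbitrary Markov process as literally stated; since the paper only ever invokes the theorem for the Langevin approximation restricted to bounded open sets, this restriction is harmless here, but it is worth being explicit that you are proving a special case. One further cosmetic point: the theorem asserts the martingale property of $M_t$, whereas your display works with $M_{t\wedge\tau}$; in the paper's setting the process is only defined up to $\tau$, so this is the right object, but you should say so rather than silently replacing $t$ by $t\wedge\tau$.
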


If the Markov process $(X_t)_{t \geq 0}$ is the solution to an It\^{o} equation \eqref{Ito_sde}, with locally Lipschitz continuous coefficients $A$ and $\sigma$ on $\R_{> 0}^n$ then, its infinitesimal generator can be expressed as follows
\begin{equation}\label{generator}
    Lf(x) = \langle A(x), \nabla \rangle f{(x)}+{\frac{1}{2}} (\nabla^{\intercal} \cdot B (x) \cdot \nabla) f{(x)} = \sum_{i = 1}^n A_i(x)\frac{\partial}{\partial x_i}f(x)+\frac{1}{2}\sum_{i,j = 1}^n B_{i,j}(x)\frac{\partial^2}{\partial x_i\partial x_j}f(x),
\end{equation}
for all test functions $f\in C^{\infty}_c(\R_{> 0}^n)$, cf. \cite[Theorem 7.3.3]{oksendal2013stochastic}. 

\begin{remark}\label{gen_unique_AB} It is clear that drift and diffusion $A,B$ determine a generator of the form \eqref{generator}. Note that also the generator \eqref{generator} uniquely determines $A,B$, where we recall that $B$ is symmetric. This can be seen, e.g, by considering test functions in $C_c^{\infty} (\R_{> 0}^n)$ that equal $x_i$ or $x_ix_j$ on a bounded open set inside $\R_{> 0}^n$ \cite{ethier1986markov}.
\end{remark}

The following theorem, concerning the equivalence of martingale problems and SDEs, was first established in \cite{stroock1972support} and further studied in \cite{Kurtz2011EquivalenceOS}. The equivalence is given in terms of existence of SDEs through a weak solution, i.e., a filtered probability space and stochastic process that satisfies the SDE in its integral form \cite[Section 5.5]{oksendal2013stochastic}.

\begin{theorem}\label{thm_eqv-sde-mp}
  A stochastic process $(X_t)_{t \geq 0}$ is a solution to the martingale problem for $L$ of the form \eqref{generator}, if and only if it is a weak solution to \eqref{Ito_sde}.
\end{theorem}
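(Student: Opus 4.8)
The plan is to prove the two implications separately, following the classical equivalence between weak solutions of SDEs and solutions of the associated martingale problem. For the easier direction, suppose $X$ is a weak solution to \eqref{Ito_sde} on some filtered probability space carrying the driving Brownian motion $\widetilde{W}$. I would apply It\^o's formula to $f(X_t)$ for $f \in C_c^{\infty}(\R_{>0}^n)$. Since the quadratic covariation of the coordinates satisfies $d\langle X^i, X^j\rangle_s = (\sigma\sigma^{\intercal})_{ij}(X_s)\,ds = B_{ij}(X_s)\,ds$, the bounded-variation part of $f(X_t) - f(X_0)$ collects precisely into $\int_0^t Lf(X_s)\,ds$ with $L$ as in \eqref{generator}, while the remaining term $\int_0^t \nabla f(X_s)^{\intercal}\sigma(X_s)\,d\widetilde{W}_s$ is a local martingale---indeed a genuine martingale, because $f$ and its derivatives are compactly supported and the coefficients are bounded on that support. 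Hence $M_t = f(X_t) - \int_0^t Lf(X_s)\,ds$ is a martingale, so $X$ solves the martingale problem for $L$.

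For the converse, suppose $X$ solves the martingale problem for $L$. I would first feed the coordinate functions $f(x) = x_i$ into the martingale problem---after localizing with a cutoff and stopping before $X$ exits a compact subset of the positive orthant, since $x_i$ is not itself compactly supported---to deduce that $M^i_t \coloneqq X^i_t - X^i_0 - \int_0^t A_i(X_s)\,ds$ is a continuous local martingale. Next, applying the martingale problem to the products $f(x) = x_i x_j$ and combining with the previous identity through integration by parts, I would identify the quadratic covariation $\langle M^i, M^j\rangle_t = \int_0^t B_{ij}(X_s)\,ds = \int_0^t (\sigma\sigma^{\intercal})_{ij}(X_s)\,ds$. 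Thus the vector local martingale $M = (M^1,\dots,M^n)$ carries exactly the covariation structure dictated by the diffusion matrix $B$.

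The final and most delicate step is to represent $M$ as a stochastic integral against a Brownian motion, that is, to produce $\widetilde{W}$ with $M_t = \int_0^t \sigma(X_s)\,d\widetilde{W}_s$; once this is achieved, substituting back gives $X_t - X_0 = \int_0^t A(X_s)\,ds + \int_0^t \sigma(X_s)\,d\widetilde{W}_s$, which exhibits $X$ as a weak solution. When $\sigma(X_s)$ is invertible this is immediate: one sets $\widetilde{W}_t = \int_0^t \sigma^{-1}(X_s)\,dM_s$ and verifies via L\'evy's characterisation that $\widetilde{W}$ is a Brownian motion. The main obstacle is the genuinely degenerate case, where $\sigma$ is only positive semidefinite and may drop rank---precisely what occurs at the boundary of the positive orthant for reaction-network diffusions. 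Here I would invoke the representation theorem for continuous local martingales (cf.\ \cite{Kurtz2011EquivalenceOS,ikeda1989stochastic}): one enlarges the probability space with an independent Brownian motion supplying the degrees of freedom in the kernel of $\sigma$, and splices it together with a pseudo-inverse construction on the range of $\sigma$ to obtain a single Brownian motion $\widetilde{W}$ realizing $M_t = \int_0^t \sigma(X_s)\,d\widetilde{W}_s$. I expect the only real technical hurdle to be the bookkeeping in this degenerate representation---ensuring measurable selection of the pseudo-inverse and independence of the adjoined Brownian motion---the remainder being routine It\^o calculus. As the full argument is established in \cite{stroock1972support,Kurtz2011EquivalenceOS}, I would cite it rather than reproduce every detail.
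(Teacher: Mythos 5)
Your outline is correct and is exactly the classical Stroock--Varadhan/Kurtz argument: It\^o's formula for the forward direction, and coordinate/product test functions plus the martingale representation theorem (with an extension of the probability space to handle degenerate $\sigma$) for the converse. The paper does not reprove this result but simply cites \cite{stroock1972support,Kurtz2011EquivalenceOS}, which contain precisely the proof you sketch, so your proposal matches the paper's intended justification.
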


    \begin{example}\label{1d_non_ident_generator}
In order to exemplify notation, we again consider Example \ref{1d_non_ident_generator_intro}:
$$2S\xleftarrow{1}\emptyset\xrightarrow{4}S\xrightarrow{1} \emptyset\xrightarrow{2}3S , $$
$$2S\xleftarrow{4}\emptyset\xrightarrow{1}S \xrightarrow{1} \emptyset\xrightarrow{1}3S .$$
The stoichiometric matrix is a row vector: $\Gamma = (
    2,1,-1,3)$, which simplifies the calculation of both the drift vector and the diffusion matrix, that are
$A(x)=12-x$, and $ B(x)=x+26$ for all $x \in \R_{>0}$. Therefore, the LA for the RN is
\[
  d X_t = (12 - X_t) dt + \sqrt{X_t + 26} d W_t; 
\]
and the
 infinitesimal generator associated to the RN is
\[
  L f(x) = (12 - x) \frac{d}{d x} f(x) + \frac{1}{2} (x + 26) \frac{d^2}{d x^2} f(x).
\]
\end{example}

\subsection{Reaction-identifiability and confoundability given generator}
\begin{definition}
Let $\cN = (\cS, \cC, \cR)$ and $\cN' = (\cS, \cC', \cR')$ be two RNs with the same set of species, where $\cR \neq \cR'$.
\begin{itemize}
    \item $\cN$ is \textbf{reaction-identifiable} w.r.t. its generator, if and only if for any distinct rate constant vectors $\k , \k' \in\R^d_{>0}$, the corresponding generators of their LAs are different.

    \item $\cN$ and $\cN'$ are \textbf{confoundable} w.r.t. their generators, if there do exist rate constant vectors $\k \in \R^d_{>0}$ and $\k'\in\R^{d'}_{>0}$ such that the corresponding generators of their LAs are identical, where $d$ and $d'$ denote the numbers of reactions in $\cN$ and $\cN'$, respectively. Otherwise, we say that $\cN$ and $\cN'$ are \textbf{unconfoundable} w.r.t. their generators.

\end{itemize}
\end{definition}

In analogy to  \cite{ident_CRNs}, we have the following result, whose proof is deferred to Subsection \ref{prf_theorem1}.
\begin{theorem}\label{theorem1}
Let $\cN = (\cS, \cC, \cR)$ be an RN. The followings are equivalent.
\begin{enumerate}
\item $\cN$ is reaction-identifiable w.r.t. its generator.

\item There is a non-empty bounded open set $U\subseteq\R^n_{>0}$, such that for all rate constant vector{s} $\kappa \neq \kappa'\in\R^d_{>0}$ the corresponding generators are different on $C^{\infty}_c(U)$.

\item For every source complex $y\in \cC$, the set of vectors in \eqref{generalized_reaction_vectors}
is linearly independent.

\end{enumerate}

\end{theorem}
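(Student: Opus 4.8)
The plan is to reduce equality of generators to a finite linear-algebra condition, source complex by source complex, and then read off the three equivalences from the linear independence of monomials. By Remark \ref{gen_unique_AB}, two generators $L_\kappa$ and $L_{\kappa'}$ of the form \eqref{generator} coincide on $C_c^\infty(U)$ for a nonempty open $U \subseteq \R^n_{>0}$ if and only if their drift vectors and diffusion matrices agree pointwise on $U$: choosing test functions that equal $x_i$ (respectively $x_i x_j$) on a small ball compactly contained in $U$ recovers $A_i$ (respectively $B_{i,j}$) at each interior point. I first record the identities obtained by grouping \eqref{drift_vector} and \eqref{diffusion_matrix} according to the source complex,
\[
A_\kappa(x) = \sum_{y \in \cC} x^y \sum_{y \to y' \in \cR} \kappa_{y\to y'} (y'-y), \qquad B_\kappa(x) = \sum_{y \in \cC} x^y \sum_{y \to y' \in \cR} \kappa_{y\to y'}(y'-y)\cdot(y'-y)^\intercal,
\]
and similarly for $\kappa'$. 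Writing $\delta_{y\to y'} \coloneqq \kappa_{y\to y'} - \kappa'_{y\to y'}$, the difference $(A_\kappa - A_{\kappa'},\, B_\kappa - B_{\kappa'})$ is a vector of polynomials whose coefficient of $x^y$ is exactly $\sum_{y\to y'} \delta_{y\to y'}\,\big((y'-y),(y'-y)\cdot(y'-y)^\intercal\big)$.

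The elementary fact I would invoke is that distinct monomials $x^y$, $y \in \Z^n_{\geq 0}$, are linearly independent as functions on every nonempty open set $U \subseteq \R^n_{>0}$ (a polynomial vanishing on an open set is identically zero). Consequently, for any fixed nonempty open $U$, the generators agree on $C_c^\infty(U)$ if and only if the coefficient of each $x^y$ vanishes, i.e. if and only if for every source complex $y \in \cC$
\[
\sum_{y\to y' \in \cR} \delta_{y\to y'}\big((y'-y),(y'-y)\cdot(y'-y)^\intercal\big) = 0.
\]
Crucially, this condition does not depend on the choice of $U$, which is precisely what makes the uniform set in assertion (2) available for free.

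With this reduction the three implications are short. For (3) $\Rightarrow$ (2), fix any nonempty bounded open $U$; if $\kappa \neq \kappa'$ then $\delta \neq 0$, so some source complex $y$ has the coefficients $\delta_{y\to y'}$ not all zero, and linear independence of the family \eqref{generalized_reaction_vectors} forces the displayed sum to be nonzero, whence $L_\kappa \neq L_{\kappa'}$ on $C_c^\infty(U)$. The implication (2) $\Rightarrow$ (1) is immediate, since any $f \in C_c^\infty(U)$ extends by zero to an element of $C_c^\infty(\R^n_{>0})$, so distinctness on the smaller class implies distinctness on the larger one. Finally, for (1) $\Rightarrow$ (3) I argue by contraposition: if the family is linearly dependent for some source complex $y$, there is a nonzero $\delta$ supported on the reactions out of $y$ for which the sum vanishes, and I realize $\delta$ as a difference of two positive rate vectors, e.g. $\kappa_r = \max(\delta_r,0)+1$ and $\kappa'_r = \max(-\delta_r,0)+1$, which lie in $\R^d_{>0}$, are distinct, and yield equal generators, contradicting (1).

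The main obstacle to watch is the bookkeeping at two points. First, justifying the local recovery of $A$ and $B$ from the generator on the restricted class $C_c^\infty(U)$, rather than on all of $\R^n_{>0}$ as in Remark \ref{gen_unique_AB}; I handle this by localizing the test functions to a ball whose closure is contained in $U$, so that the generator acting on such a function returns the desired coefficient. Second, the positivity constraint $\kappa,\kappa' \in \R^d_{>0}$ in the converse, where one must turn an abstract linear dependence into an honest pair of \emph{positive} rate vectors differing by $\delta$; the explicit choice above accomplishes this, using that $\delta$ perturbs only reactions sharing the single source complex $y$, so that distinct source complexes contribute to distinct monomials and no cross-complex interference arises.
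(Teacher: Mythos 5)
Your proof is correct and follows essentially the same route as the paper's: reduce generator equality to equality of the polynomial coefficients of $A$ and $B$, invoke linear independence of monomials on an open set, and construct positive rate vectors realizing a given linear dependence supported on a single source complex. The only cosmetic difference is that you traverse the cycle as $(3)\Rightarrow(2)\Rightarrow(1)\Rightarrow(3)$ while the paper uses $(1)\Rightarrow(2)\Rightarrow(3)\Rightarrow(1)$, and you make explicit the positive shift $\kappa_r=\max(\delta_r,0)+1$, $\kappa'_r=\max(-\delta_r,0)+1$ that the paper leaves implicit.
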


\begin{remark}\label{rmk-theorem1}
   If two RNs $\cN$ and $\cN'$ are unconfoundable w.r.t. their generators, then an analogous of property
(2) in Theorem \ref{theorem1} holds for all rate constant vectors $\kappa \in \mathbb{R}^d_{>0}$ and $\kappa' \in \mathbb{R}^{d'}_{>0}$; see Subsection \ref{prf_theorem1}.
\end{remark}

\begin{lemma}\label{conseq_lemma}
{If an RN is reaction-identifiable  w.r.t. its ODE, then this RN is also reaction-identifiable w.r.t. its generator.}
\end{lemma}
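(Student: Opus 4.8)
The plan is to combine the two linear-algebraic characterisations of identifiability and observe that the generator criterion is obtained from the ODE criterion by appending extra coordinates, so that linear independence is automatically inherited. By Theorem \ref{theorem1}, $\cN$ is reaction-identifiable w.r.t. its generator if and only if, for every source complex $y \in \cC$, the augmented family
\[
\left\{\left(y'-y,\,(y'-y)\cdot(y'-y)^{\intercal}\right) \,\middle|\, y\to y'\in\cR\right\}
\]
is linearly independent in $\R^n \times \R^n\otimes\R^n$. On the other hand, reaction-identifiability w.r.t. the ODE is, by \cite[Theorem 3.2]{ident_CRNs}, equivalent to the linear independence, for every source complex $y\in\cC$, of the reaction vectors $\{y'-y \mid y\to y'\in\cR\}$ in $\R^n$; this is the same reduction used implicitly in the special cases of Subsection~4.1.

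First I would fix an arbitrary source complex $y\in\cC$ and assume, aiming for the generator criterion, a vanishing linear combination
\[
\sum_{y\to y'\in\cR}\alpha_{y\to y'}\left(y'-y,\,(y'-y)\cdot(y'-y)^{\intercal}\right)=0 .
\]
Reading off the first component, i.e.\ applying the canonical projection $\R^n \times \R^n\otimes\R^n \to \R^n$, yields $\sum_{y\to y'\in\cR}\alpha_{y\to y'}(y'-y)=0$, which is a linear dependence among the reaction vectors sharing the source complex $y$. Since $\cN$ is reaction-identifiable w.r.t. its ODE, these reaction vectors are linearly independent, so all coefficients $\alpha_{y\to y'}$ vanish. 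As $y$ was arbitrary, the augmented family is linearly independent for every source complex, and Theorem \ref{theorem1} then delivers reaction-identifiability w.r.t. the generator.

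No genuine obstacle arises here: the entire content is that linear independence of a set of vectors is preserved when those vectors are enlarged by additional coordinates (here the diffusion data $(y'-y)\cdot(y'-y)^{\intercal}$). The only point requiring care is to invoke the correct characterisation of ODE-identifiability from \cite{ident_CRNs}, namely its reduction, source complex by source complex, to linear independence of the reaction vectors $y'-y$. This argument also makes transparent the asymmetry emphasised in Remark~\ref{rem:nontrivial}: the implication is strict, since the second-order component may restore independence lost among the reaction vectors alone, so that an RN can be reaction-identifiable w.r.t. its generator while failing to be so w.r.t. its ODE.
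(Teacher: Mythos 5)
Your proof is correct and follows essentially the same route as the paper: invoke \cite[Theorem 3.2]{ident_CRNs} to reduce ODE-identifiability to linear independence of the reaction vectors per source complex, observe that appending the coordinates $(y'-y)\cdot(y'-y)^{\intercal}$ preserves linear independence, and conclude via Theorem~\ref{theorem1}. The only difference is that you spell out, via projection onto the first component, the step the paper states in one sentence.
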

{\begin{proof}
 Suppose an RN is reaction-identifiable  w.r.t. its ODE. Then, according to \cite[Theorem 3.2]{ident_CRNs}, for any source complex $y \in \cC$, the set $\{y' - y \colon y \to y' \in \cR\}$ is linearly independent. This, in turn, implies that the set of vectors in \eqref{generalized_reaction_vectors} is also linearly independent. Therefore, the RN is also reaction-identifiable w.r.t. its generator by Theorem \ref{theorem1}.
\end{proof}
}

However, the converse is not necessarily true, as demonstrated in the following example.

\begin{example}\label{ex_ODE_yes_SDE_no}
Consider the following birth and death reaction network:
$$\emptyset\xleftarrow{\kappa_1} S\xrightarrow{\kappa_2} 2S,$$
which is used to describe the evolution of cancer cells apart from other applications in biology \cite{10.1093/bib/bbk006}.

There is only one source complex $y = S$ and we have that the set of vectors defined in (\ref{generalized_reaction_vectors}) i.e.$ \{ (1,1)^{\intercal},  (-1,1)^{\intercal}\}$ are linearly independent. Therefore, according to Theorem \ref{theorem1}, this RN is reaction-identifiable w.r.t. its generator. 
However, if we consider $(\k_1,\k_2) = (1.5,1)$ and $(\k_1',\k_2') = (2,1.5)$, then both reaction rates yield the same ODE system:
\[
  \dot{x} = x-1.5x = 1.5x-2x=-0.5x.
  \]
It follows that the RN is not reaction-identifiable w.r.t. its ODE.
\end{example}
\begin{remark}
  In Example \ref{ex_ODE_yes_SDE_no}, $S$ is the only source complex in the RN, which has deficiency one. In fact, it is not hard to verify that a reaction network with a single source complex is reaction-identifiable w.r.t. its ODE if and only if it has deficiency zero. However, as demonstrated in Example \ref{ex_ODE_yes_SDE_no}, having zero deficiency is only a sufficient but not necessary condition for reaction-identifiability w.r.t. its generator.
\end{remark}

If an RN is not reaction-identifiable w.r.t. its generator, then it remains non-reaction-identifiable even with adding additional reactions.

\begin{lemma}\label{cor_subnetwork}
  Suppose an RN $\mathcal{N}$ is not reaction-identifiable w.r.t. its generator. Then, for every reaction network $\mathcal{N}'$ such that $\mathcal{N}$ is a subnetwork of $\cN'$, $\cN'$ is also not reaction-identifiable w.r.t. its generator. 
\end{lemma}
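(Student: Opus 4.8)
The plan is to bypass any direct comparison of generators and instead reduce everything to the linear-algebraic criterion of Theorem~\ref{theorem1}. Since $\cN$ is assumed not reaction-identifiable w.r.t.\ its generator, the contrapositive of part~(3) of Theorem~\ref{theorem1} furnishes a source complex $y \in \cC$ for which the family
\[
\mathcal{V}_\cN(y) \coloneqq \left\{(y'-y,\,(y'-y)\cdot(y'-y)^{\intercal}) \mid y \to y' \in \cR\right\}
\]
appearing in \eqref{generalized_reaction_vectors} is linearly \emph{dependent}. The goal is then to show that the corresponding family $\mathcal{V}_{\cN'}(y)$ for the larger network $\cN'$ is also linearly dependent, so that Theorem~\ref{theorem1}, now applied to $\cN'$, immediately yields the claim.

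First I would record that, by the definition of subnetwork, $\cN$ being a subnetwork of $\cN'$ means $\cR \subseteq \cR'$ (and hence $\cS \subseteq \cS'$, $\cC \subseteq \cC'$). In particular the complex $y$ is still a complex of $\cN'$ and, being the source of at least one reaction in $\cR \subseteq \cR'$, remains a source complex of $\cN'$. Every reaction $y \to y'$ contributing to $\mathcal{V}_\cN(y)$ therefore also contributes to $\mathcal{V}_{\cN'}(y)$, so (modulo the embedding discussed below) $\mathcal{V}_\cN(y) \subseteq \mathcal{V}_{\cN'}(y)$. Since any superset of a linearly dependent set is again linearly dependent, $\mathcal{V}_{\cN'}(y)$ is linearly dependent, and Theorem~\ref{theorem1} delivers that $\cN'$ is not reaction-identifiable w.r.t.\ its generator.

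The only point requiring care---and the main, though mild, obstacle---is that $\cN'$ may contain additional species, so its reaction vectors live in a larger space $\R^{n'}$ with $n' \geq n$. Here I would make the embedding explicit: a reaction vector $y'-y \in \R^n$ is identified with its zero-padded image in $\R^{n'}$, and the outer product $(y'-y)\cdot(y'-y)^{\intercal}$ becomes the $n' \times n'$ matrix obtained by placing the original $n \times n$ block in the upper-left corner and zeros elsewhere. A witnessing relation $\sum_r c_r (l_r,\, l_r l_r^{\intercal}) = 0$ for the dependence of $\mathcal{V}_\cN(y)$ asserts precisely that $\sum_r c_r l_r = 0$ and $\sum_r c_r l_r l_r^{\intercal} = 0$; both are coordinatewise identities, unaffected by adjoining zero coordinates, so the same coefficients $(c_r)$ witness the dependence of the embedded family in $\R^{n'} \times \R^{n'} \otimes \R^{n'}$. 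Hence the dependence of $\mathcal{V}_\cN(y)$ transfers verbatim to $\mathcal{V}_{\cN'}(y)$, and the argument is complete.
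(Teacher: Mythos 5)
Your proof is correct, but it takes a different route from the paper's. The paper argues directly from the definition of reaction-identifiability w.r.t.\ the generator: given two distinct rate constant vectors $\kappa^1_{\cN}\neq\kappa^2_{\cN}$ producing the same generator for $\cN$, it extends them to $\cN'$ as $(\kappa^1_{\cN},a)$ and $(\kappa^2_{\cN},a)$ with a common (positive) vector $a$ on the added reactions; the contributions of the added reactions to drift and diffusion then cancel identically, so the two extended vectors are still distinct yet give the same generator for $\cN'$. You instead route everything through the equivalence of Theorem~\ref{theorem1}: non-identifiability of $\cN$ yields a source complex $y$ whose family \eqref{generalized_reaction_vectors} is linearly dependent, and since $\cR\subseteq\cR'$ the corresponding family for $\cN'$ at $y$ is a superset (after zero-padding into $\R^{n'}$ when $\cN'$ has extra species), hence still dependent, so Theorem~\ref{theorem1} applied to $\cN'$ concludes. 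Both arguments are short and sound; yours has the merit of making the extra-species embedding explicit and of checking that zero-padding preserves linear dependence of the pairs $(y'-y,(y'-y)\cdot(y'-y)^{\intercal})$, a point the paper's direct construction glosses over (it also lets you sidestep the paper's small slip of writing $a\in\R^{d-d'}$ where $\R_{>0}^{d'-d}$ is meant). The paper's construction, on the other hand, is self-contained and does not invoke the characterisation theorem, so it would survive even in settings where a clean linear-algebraic criterion is unavailable.
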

{\begin{proof}
    $\mathcal{N}$ is not reaction-identifiable w.r.t. its generator so there exists two vectors of rate constants $\kappa^1_{\mathcal{N}}$ and $\k^2_{\mathcal{N}}$ giving the same generator.  For any $\mathcal{N}'$ with $d' \geq d$ reactions such that $\cN$ is a subnetwork of $\cN'$, we consider the rate constant vectors $\kappa^1_{\cN'} = (\kappa^1_{\cN},a)$ and $\kappa^2_{\mathcal{N}'}=(\kappa^2_{\mathcal{N}},a)$ where $a \in \R^{d-d'}$. These two vectors give the same generator and thus $\cN'$ is not reaction-identifiable w.r.t. its generator. 
\end{proof}}

\subsection{{Confoundability} of RNs given generators}
In the previous subsection, we consider a given RN and its generator.

In this subsection, we investigate the confoundability of RNs given the generator.

\begin{theorem}\label{confoundability}
Two RNs $\cN = (\cS, \cC, \cR)$ and $\cN' = (\cS, \cC', \cR')$ are confoundable w.r.t. their generators if and only if they have the same source complexes and for every source complex $y\in\cC$, ${\rm Cone}_\cR(y) \cap {\rm Cone}_{\cR'}(y) \neq \emptyset$, where ${\rm Cone}_\cR(y) $ is given by \eqref{def_cone}.
\end{theorem}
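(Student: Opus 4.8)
Theorem \ref{confoundability} characterizes when two RNs are confoundable w.r.t. their generators. Let me plan a proof.

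The key connection: the generator is determined by $(A, B)$, and for mass-action kinetics these decompose by source complex $y$ into terms $\sum_{y\to y'} \kappa_{y\to y'} x^y (\ldots)$. Because different source complexes give different monomials $x^y$, the generators of two RNs agree iff the drift and diffusion contributions match *for each source complex separately*.

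Let me sketch the proof carefully.

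The plan is to reduce the equality of generators to a finite system of linear conditions indexed by source complexes, using the uniqueness statement in Remark~\ref{gen_unique_AB} together with the linear independence of monomials. First, by Remark~\ref{gen_unique_AB}, the generator \eqref{generator} of the LA uniquely determines the pair $(A, B)$ (with $B$ symmetric). Hence $\cN$ with rates $\kappa$ and $\cN'$ with rates $\kappa'$ have identical generators if and only if $A = A'$ and $B = B'$ as functions on $\R^n_{>0}$. Grouping the expressions \eqref{drift_vector} and \eqref{diffusion_matrix} by source complex, I would write
$$A(x) = \sum_{y\in\cC} x^y \sum_{y\to y'\in\cR}\kappa_{y\to y'}(y'-y), \qquad B(x) = \sum_{y\in\cC} x^y \sum_{y\to y'\in\cR}\kappa_{y\to y'}(y'-y)\cdot(y'-y)^\intercal,$$
and analogously for $\cN'$. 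Since the monomials $\{x^y : y\in\cC\cup\cC'\}$ are linearly independent as functions on any nonempty open subset of $\R^n_{>0}$, the identities $A = A'$ and $B = B'$ hold if and only if, for every complex $y$, the coefficients of $x^y$ agree on both sides, in both the $\R^n$-valued drift component and the $\R^n\otimes\R^n$-valued diffusion component.

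Next I would establish the necessity of equal source complexes. Suppose the generators coincide but some complex $y$ is a source complex of $\cN$ and not of $\cN'$. Then the coefficient of $x^y$ in $B'$ is the zero matrix (an empty sum), so matching forces $\sum_{y\to y'\in\cR}\kappa_{y\to y'}(y'-y)\cdot(y'-y)^\intercal = 0$. Each summand is a positive multiple of the rank-one positive semidefinite matrix $(y'-y)\cdot(y'-y)^\intercal$, so the sum is positive semidefinite and vanishes only if every $y'-y = 0$; since reactions are nontrivial ($y'\neq y$), this is impossible. Hence confoundability forces the source complexes of $\cN$ and $\cN'$ to coincide, and the reverse inclusion follows by symmetry.

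Finally, assuming $\cN$ and $\cN'$ share the same source complexes, matching the coefficients of $x^y$ for each source complex $y$ is exactly the requirement that
$$\left(\sum_{y\to y'\in\cR}\kappa_{y\to y'}(y'-y),\ \sum_{y\to y'\in\cR}\kappa_{y\to y'}(y'-y)\cdot(y'-y)^\intercal\right) = \left(\sum_{y\to y''\in\cR'}\kappa'_{y\to y''}(y''-y),\ \sum_{y\to y''\in\cR'}\kappa'_{y\to y''}(y''-y)\cdot(y''-y)^\intercal\right).$$
The left-hand pair, built with strictly positive coefficients $\kappa_{y\to y'}$, is by definition \eqref{def_cone} an element of $\mathrm{Cone}_\cR(y)$, and the right-hand pair an element of $\mathrm{Cone}_{\cR'}(y)$. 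Thus the existence of positive rate vectors producing equal generators is equivalent to $\mathrm{Cone}_\cR(y)\cap\mathrm{Cone}_{\cR'}(y)\neq\emptyset$ for every source complex $y$: given such common elements, one reads off the positive coefficients reaction by reaction (each reaction has a unique source complex) to assemble the full rate vectors $\kappa$ and $\kappa'$, while conversely equal generators yield a common point of the two cones by the displayed identity. I expect the main subtlety to be the necessity direction for the source complexes, where one must invoke the positive semidefiniteness of the diffusion term rather than the drift, since the drift contribution alone could cancel even when $y$ is a genuine source complex.
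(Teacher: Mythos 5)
Your proposal is correct and follows essentially the same route as the paper: decompose the drift and diffusion by source complex, invoke the linear independence of the monomials $x^y$ on an open set to match coefficients, and identify the resulting coefficient equalities with points of $\mathrm{Cone}_\cR(y)\cap\mathrm{Cone}_{\cR'}(y)$, assembling global rate vectors from the per-complex data since each reaction has a unique source complex. Your handling of the subtle point — that equality of source complexes is forced by the positive semidefiniteness (indeed positive trace) of the diffusion contribution, unlike in the ODE setting — is exactly the observation the paper makes in Remark~\ref{remark_identical_source_complex} and at the start of its proof.
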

The proof of Theorem \ref{confoundability} can be found in Subsection \ref{prf_confoundability}.

\begin{example}\label{example_cofoundability} 
Consider the following two different RNs:
$$S\xleftarrow{5}\emptyset\xrightarrow{1}4S,  \quad S\xrightarrow{1}\emptyset , $$
$$2S\xleftarrow{3}\emptyset\xrightarrow{1}3S, \quad S\xrightarrow{1}\emptyset .$$
Again both RNs give the same ODE
$$\frac{ds}{dt}=9-s.$$
Despite the RNs being different, there exists two sets of reaction rates for which the ODEs are the same. Hence the RN structure cannot be distinguished from the ODE alone. This is called (ODE)-confoundable in \cite{craciun2}. Furthermore both RNs also have their SDEs given by the same drift vector and diffusion matrix
$$A(s)=9-s,\quad B(s)=s+21.$$
These RNs are thus confoundable w.r.t. their generator. 
As the corresponding SDE is regular enough, 
 we conclude that these RNs are confoundable w.r.t. the SDE. 
\end{example}

\begin{remark}\label{remark_identical_source_complex}

    As pointed out in \cite{szederkenyi2009comment}, the confoundability of two RNs w.r.t. their ODEs does not necessarily require their source complexes to be identical. A counterexample in \cite{szederkenyi2009comment} is constructed by choosing special rate constants for reactions with the additional source complex, such that its contribution to the corresponding ODE vanishes. However, this does not apply in our case, as the contribution of each source complex to the diffusion matrix is nonzero and non-negative definite. For further details, we refer the reader to the proof in Subsection  \ref{prf_confoundability}.

\end{remark}

Similar to Lemma \ref{conseq_lemma}, we state the following lemma.
\begin{lemma}\label{l.unconfoundability}
If two RN are unconfoundable w.r.t. their ODEs, then they are unconfoundable w.r.t. their generators.
\end{lemma}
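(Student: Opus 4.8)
The plan is to prove the contrapositive: if $\cN$ and $\cN'$ are confoundable w.r.t. their generators, then they are confoundable w.r.t. their ODEs. The whole argument rests on the elementary observation that the generator carries strictly more information than the ODE. By Remark \ref{gen_unique_AB} the generator \eqref{generator} determines both $A$ and $B$ uniquely, whereas the ODE \eqref{ode} is governed by the drift $A$ alone. Since matching generators forces matching drifts, and the drift \eqref{drift_vector} is literally the right-hand side $F$ of the mass-action ODE \eqref{ode}, confoundability at the generator level descends immediately to confoundability at the ODE level.

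Concretely, I would proceed as follows. First, assume $\cN$ and $\cN'$ are confoundable w.r.t. their generators, so by definition there exist rate constant vectors $\k \in \R^d_{>0}$ and $\k' \in \R^{d'}_{>0}$ whose associated generators $L$ and $L'$ of the form \eqref{generator} agree on $C_c^{\infty}(U)$ for a suitable bounded open $U \subseteq \R^n_{>0}$. Second, invoke Remark \ref{gen_unique_AB}: testing against functions that equal $x_i$ and $x_i x_j$ on $U$ recovers $A = A'$ and $B = B'$ on $U$; in particular $A \equiv A'$ on $U$. Third, since $A$ and $A'$ are polynomial vector fields (mass-action kinetics), their equality on the open set $U$ forces $A \equiv A'$ on all of $\R^n$. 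Fourth, recall from \eqref{ode}--\eqref{drift_vector} that $A$ is exactly the vector field $F$ defining the ODE $\dot x = F(x)$ for $(\cN,\k)$, and likewise $A'$ for $(\cN',\k')$; hence the two ODE systems coincide, which is precisely the statement that $\cN$ and $\cN'$ are confoundable w.r.t. their ODEs. Taking the contrapositive yields the lemma.

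Equivalently, this implication can be read off from the cone characterisation of Theorem \ref{confoundability}: projecting ${\rm Cone}_\cR(y) \subseteq \R^n \times \R^n\otimes\R^n$ onto its first factor $\R^n$ yields the cone of ODE reaction directions $\{\sum_{y\to y'\in\cR}\alpha_{y\to y'}(y'-y) : \alpha > 0\}$, so a nonempty intersection of the generator cones for every source complex projects to a nonempty intersection of the corresponding ODE cones, which suffices for ODE-confoundability in the sense of \cite{craciun2}. I nevertheless find the direct drift-matching argument cleaner, since it sidesteps the source-complex subtlety flagged in Remark \ref{remark_identical_source_complex}: one never needs the source complexes of $\cN$ and $\cN'$ to coincide, only that equal generators imply equal drifts.

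I do not anticipate a genuine obstacle; the content is essentially that the generator refines the ODE. The only points requiring a line of care are the appeal to Remark \ref{gen_unique_AB} to extract the drift from the generator, and the remark that polynomial equality on a bounded open set propagates to all of $\R^n$ so that the ODE vector fields agree globally. Both are routine. It is worth noting that the converse fails, since a generator also encodes the diffusion $B$, invisible to the ODE; this is exactly the phenomenon illustrated by Example \ref{ex_ODE_yes_SDE_no}.
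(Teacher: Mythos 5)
Your proof is correct. The paper itself gives no explicit argument for this lemma, saying only that it is ``similar to Lemma~\ref{conseq_lemma}''; the proof of that lemma runs through the combinatorial characterisations (linear independence of reaction vectors for the ODE via \cite[Theorem 3.2]{ident_CRNs} versus the extended vectors of Theorem~\ref{theorem1} for the generator), and the analogous route here would be your second argument: project the generator cones of Theorem~\ref{confoundability} onto their first factor to land in the ODE cones. Your primary argument is genuinely different and, I think, cleaner: take the contrapositive, note via Remark~\ref{gen_unique_AB} that equal generators force equal drifts $A=A'$ on an open set, hence everywhere by polynomial identity, and observe that the drift \eqref{drift_vector} is exactly the ODE vector field \eqref{ode}. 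What this buys you is exactly what you point out: it bypasses the source-complex issue of Remark~\ref{remark_identical_source_complex} and the Szederk\'enyi correction \cite{szederkenyi2009comment} entirely, since you never need a cone characterisation of ODE-confoundability, only the definition (existence of rate constants producing identical ODEs). The cone-projection route, by contrast, makes the relationship to Theorem~\ref{confoundability} transparent and mirrors the paper's treatment of reaction-identifiability; but as a proof of this lemma it requires a word of care that the relevant direction of the ODE cone criterion (nonempty intersection implies confoundability) is the unproblematic one. Both arguments are sound; your direct one is the one I would keep.
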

Not surprisingly, the converse of Lemma~\ref{l.unconfoundability} does not hold, as demonstrated in Remark~\ref{remark_identical_source_complex}.

By the definition of the generator, and in a manner similar to Lemma \ref{cor_subnetwork}, the following extension result holds.
\begin{lemma}\label{confoundability_added_reactions}{    Let $\cN_1 = (\cS, \cC_1, \cR_1)$ and $\cN_2 = (\cS, \cC_2, \cR_2)$}
 be two RNs confoundable w.r.t. their generators, and let $r\not\in\mathcal{R}_1,r\not\in\mathcal{R}_2$.  Then also the RNs obtained by adding reaction $r$ to both $\mathcal{R}_1$ and $\mathcal{R}_2$, are confoundable w.r.t. their generators.
\end{lemma}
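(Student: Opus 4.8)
The plan is to exploit the additive structure of the drift vector and diffusion matrix over reactions, together with the fact that the generator is uniquely determined by the pair $(A,B)$ (Remark~\ref{gen_unique_AB}). Since $\cN_1$ and $\cN_2$ are confoundable w.r.t. their generators, by definition there exist rate constant vectors $\k\in\R^{d_1}_{>0}$ and $\k'\in\R^{d_2}_{>0}$, with $d_1=|\cR_1|$ and $d_2=|\cR_2|$, such that the associated drift vectors and diffusion matrices coincide on $\R^n_{>0}$; write $A_1=A_2=:A$ and $B_1=B_2=:B$.

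I would then write the added reaction as $r=(y_r\to y_r')$ and assign it the \emph{same} positive rate constant $\alpha>0$ in both extended networks $\cN_1'=(\cS,\cC_1\cup\{y_r,y_r'\},\cR_1\cup\{r\})$ and $\cN_2'=(\cS,\cC_2\cup\{y_r,y_r'\},\cR_2\cup\{r\})$. By \eqref{drift_vector} and \eqref{diffusion_matrix}, the drift and diffusion of each extended network are obtained by adding to $A$ and $B$ the single-reaction contributions
\[
\Delta A(x) = (y_r'-y_r)\,\alpha\, x^{y_r},\qquad
\Delta B(x) = (y_r'-y_r)\cdot(y_r'-y_r)^{\intercal}\,\alpha\, x^{y_r}.
\]
These increments depend only on the reaction $r$ and on $\alpha$, hence are identical for both networks. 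Consequently the extended networks share the common drift $A+\Delta A$ and diffusion $B+\Delta B$, so by Remark~\ref{gen_unique_AB} their generators coincide.

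To close the argument in the sense of the definition of confoundability, I would verify the remaining structural hypotheses: both extended networks share the species set $\cS$, and since $\cR_1\neq\cR_2$ with $r\notin\cR_1\cup\cR_2$ we have $\cR_1\cup\{r\}\neq\cR_2\cup\{r\}$. Together with the matching generators produced for the rate vectors $(\k,\alpha)$ and $(\k',\alpha)$, this shows that $\cN_1'$ and $\cN_2'$ are confoundable w.r.t. their generators.

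I expect no serious obstacle here, as the entire argument rests on the linearity of $(A,B)$ in the rate constants and on the independence of each reaction's contribution. The only point meriting care is the bookkeeping that the new reaction supplies the same term to both networks, which holds precisely because it is assigned the identical rate $\alpha$ in each; this is the analogue, for confoundability of pairs, of the subnetwork argument already used in the proof of Lemma~\ref{cor_subnetwork}.
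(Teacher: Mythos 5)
Your proposal is correct and is exactly the argument the paper intends: the paper gives no separate proof, merely noting that the result follows ``in a manner similar to Lemma~\ref{cor_subnetwork}'', i.e.\ by assigning the added reaction the same rate constant in both networks and using the additivity of $(A,B)$ over reactions together with Remark~\ref{gen_unique_AB}. Your additional check that $\cR_1\cup\{r\}\neq\cR_2\cup\{r\}$ is a sensible piece of bookkeeping that the paper leaves implicit.
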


\section{Conclusion}
This work rigorously examines the identifiability of SDEs derived from biochemical reaction networks under mass-action kinetics. Focusing on the Langevin approximation, we establish necessary and sufficient conditions for the identifiability of rate constants and reaction network structures based on the generator of the corresponding SDEs. We demonstrate that identifiability in the stochastic setting is finer than in the deterministic model, and that distinct reaction networks can generate indistinguishable stochastic dynamics. The results have direct implications for parameter inference and model selection in systems biology, offering tools to detect when models or parameters are indistinguishable from data. Future directions include extending the analysis to reflected diffusions under constrained Langevin approximations, or using different kinetics.

Conversely, our identifiability analysis was carried out under the assumption that the complete reaction-network is known and all species are observed. In practice, however, only a subset of species is typically observed, and it is therefore of practical interest to establish identifiability under partial observability. This problem is relatively well studied in the ODE setting for monomolecular reaction networks \cite{linear_RN_ODE}, and analogous questions can be addressed for SDE models as well. Moreover, observations are usually available only at discrete time points, where transition densities are rarely available in closed form (except in special cases), and, e.g., maximum-likelihood inference is both numerically and theoretically delicate \cite{SDE_inference}. Finally, while identifiability is a necessary condition for parameter estimation, it is generally not sufficient.

\section{Proofs}

\subsection{Proof of Theorem \ref{weak_react_id}} \label{prf_weak_react_id}
With Theorems \ref{theorem1} and \ref{confoundability} in hand, the validity of Theorem \ref{weak_react_id} reduces to establishing the following proposition.

\begin{proposition}
Let $\cN = (\cS, \cC, \cR)$ and $\cN' = (\cS, \cC', \cR')$ be two RNs with the same set of species, where $\cR \neq \cR'$.

\begin{enumerate}
\item $\cN$ is reaction-identifiable w.r.t. its SDE if and only if $\cN$ is reaction-identifiable w.r.t. its generator.

\item $\cN$ and $\cN'$ are unconfoundable w.r.t. their SDEs if and only if they are unconfoundable w.r.t. their generators.
\end{enumerate}

\end{proposition}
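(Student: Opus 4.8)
The plan is to reduce both equivalences to two transfer statements linking the generator of the Langevin approximation to the law of the solution stopped at the exit time from a bounded open set $U\subseteq\R^n_{>0}$, and then to observe that parts (1) and (2) follow by identical bookkeeping on their (negated) definitions. Throughout I write $(A_\k,B_\k)$ for the drift vector and diffusion matrix attached to a rate vector $\k$ via \eqref{drift_vector}--\eqref{diffusion_matrix}, and recall from Remark \ref{gen_unique_AB} that a generator $L$ of the form \eqref{generator} determines, and is determined by, the pair $(A_\k,B_\k)$.

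The first, easy, step shows that equal generators force equal stopped laws for every $U$ and every $x\in U$. If two rate vectors (for $\cN$, or for $\cN$ and $\cN'$) give the same generator, then by Remark \ref{gen_unique_AB} they give the same $(A,B)$, hence the same diffusion coefficient $\sigma=\sqrt{B}$ and literally the same equation \eqref{Ito_sde}. Theorem \ref{uniqueness} then provides a unique (strong, hence also weak) solution up to $\tau=\inf\{t:X_t(x)\notin U\}$, so the two stopped processes coincide in law from every $x\in U$. This yields the implications ``non-identifiable (resp. confoundable) w.r.t. the generator $\Rightarrow$ non-identifiable (resp. confoundable) w.r.t. the SDE'', i.e. one inclusion in each of (1) and (2).

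The second, harder, step recovers the generator from the law of a single stopped process. Suppose that for some bounded open $U$ and some $x\in U$ the two stopped processes share all finite-dimensional distributions. For a test function $f\in C_c^\infty(U)$ I would use that, starting strictly inside $U$, the exit probability $\Pr_x(\tau\le t)$ decays faster than any power of $t$ (the coefficients are bounded on $U$) and that $f$ vanishes near $\partial U$, so that $f(X_\tau)=0$ on $\{\tau\le t\}$ and hence $\E_x[f(X^\tau_t)]=\E_x[f(X_t)]+o(t^N)$ for every $N$. Consequently the full short-time (Dynkin) expansion $\E_x[f(X^\tau_t)]=\sum_{k=0}^N \tfrac{t^k}{k!}L^k f(x)+o(t^N)$ can be read off the law; equating the expansions for the two processes gives $L^k f(x)=(L')^k f(x)$ for all $k$ and all $f\in C_c^\infty(U)$. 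The case $k=1$, with test functions having prescribed first and second derivatives at $x$, already forces $A(x)=A'(x)$ and $B(x)=B'(x)$, and the higher-order identities inductively pin down all partial derivatives of $A,B$ at $x$. Since for mass-action kinetics $A$ and $B$ are polynomials in $x$ by \eqref{drift_vector}--\eqref{diffusion_matrix}, agreement of all Taylor coefficients at the single point $x$ forces $A\equiv A'$ and $B\equiv B'$ on $\R^n_{>0}$, hence $L=L'$. This gives the reverse inclusions ``non-identifiable (resp. confoundable) w.r.t. the SDE $\Rightarrow$ non-identifiable (resp. confoundable) w.r.t. the generator'', completing both equivalences.

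The main obstacle is precisely this second step: the coincidence of laws is assumed only at one initial point $x$, whereas the generator is a global object, so local information must be propagated to all of $\R^n_{>0}$. I resolve this by combining the full short-time asymptotic expansion (to extract every Taylor coefficient of $(A,B)$ at $x$) with the rigidity of polynomials, which is exactly where the mass-action structure is essential. An alternative is to invoke the Markov property to transfer the pointwise identity $A=A',\,B=B'$ to every point reachable by the diffusion and then argue by real-analyticity, but that route additionally requires control of the support of the process (e.g. non-degeneracy of $B$), which the expansion argument sidesteps. The one genuinely technical point is justifying the Dynkin expansion uniformly for the stopped process, i.e. verifying that the boundary contributes only $o(t^N)$; everything else is bookkeeping with the negated definitions, and it is identical for the reaction-identifiability statement (1) and the unconfoundability statement (2).
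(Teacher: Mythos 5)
Your first direction (equal generators imply equal stopped laws) is fine and, if anything, slightly cleaner than the paper's: you note via Remark \ref{gen_unique_AB} that equal generators force equal $(A,B)$, hence literally the same SDE, and then invoke Theorem \ref{uniqueness}; the paper routes this through the martingale-problem equivalence (Theorem \ref{thm_eqv-sde-mp}) before applying the same uniqueness result. Either way this half is correct.

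The converse is where there is a genuine gap, and it originates in how you negate the definition. You read ``not reaction-identifiable w.r.t.\ the SDE'' as supplying coincidence of the stopped laws from a \emph{single} initial point $x$, and you then try to reconstruct the global generator from that one law via the full Dynkin expansion, claiming that the identities $L^k f(x)=(L')^k f(x)$ ``inductively pin down all partial derivatives of $A,B$ at $x$.'' That claim is false when $B$ is degenerate, which happens for any RN whose reaction vectors do not span $\R^n$. Concretely, take $\cS=\{S_1,S_2\}$ with reactions $S_1\xrightarrow{\k_1}2S_1$ and $S_1+S_2\xrightarrow{\k_2}2S_1+S_2$. Both reaction vectors equal $(1,0)^{\intercal}$, so $X_2$ is constant, the generator only differentiates in the $x_1$-direction, and every iterated generator $L^k f(x)$ depends only on the restriction of $(A,B)$ to the line $\{x_2=\text{const}\}$; no amount of expanding recovers $\partial_{x_2}A_1(x)$. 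Indeed, started from any $x$ with $x_2=1$, the rate vectors $(\k_1,\k_2)=(1,2)$ and $(2,1)$ produce \emph{identical} stopped laws (both give $dX_1=3X_1\,dt+\sqrt{3X_1}\,dW_1$), while the network is reaction-identifiable w.r.t.\ its generator by Theorem \ref{theorem1}. So the single-initial-point statement you set out to prove is not true, and the expansion argument cannot ``sidestep'' the degeneracy of $B$ as you suggest.

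The paper avoids this by reading the negated definition as giving equality of the stopped laws for \emph{every} initial state $x\in U$ (this is how its proof proceeds, even though the definition as worded is ambiguous on this point). With that reading only the first-order Dynkin limit \eqref{def_generator} is needed: it yields $Lf(x)=L'f(x)$ for all $x\in U$ and $f\in C_c^{\infty}(U)$, hence $A=A'$ and $B=B'$ on the open set $U$, and the polynomial rigidity already packaged in Theorem \ref{theorem1} (via Lemma \ref{lem:monomial_independence}) globalises this to $\R^n_{>0}$. If you adopt that reading, your argument collapses to the paper's and no higher-order expansion or exit-time estimate is needed; if you insist on the single-point reading, the proposition itself fails and no proof exists.
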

\begin{proof}
(1) $\implies$:~
Suppose that $\cN$ is not reaction-identifiable w.r.t. its generator. Then there exists distinct rate constant vectors $\kappa$ and $\kappa'$  in  $\R^{d}_{>0}$ associated with the same generator on some non-empty bounded open subset $U$. By Theorem \ref{thm_eqv-sde-mp} \textemdash the equivalence of the martingale problem and the SDE \textemdash we know that the stochastic processes
associated to $\kappa$ and $\kappa'$
are weak solutions to the same SDE. Additionally, using Theorem~\ref{uniqueness}, solutions to this SDE are unique in law up to the stopping time $\tau = \inf\{t > 0, X_{t} \notin U\}$. That is, $\kappa$ and $\kappa'$ provide two RNs with the identical distribution when starting from the same initial state. Hence $\cN$ is not reaction-identifiable w.r.t. its SDE. In other words, $\cN$ is reaction-identifiable w.r.t. its generator whenever it is reaction-identifiable w.r.t. its SDE.

$\impliedby$:~
 Suppose that $\cN$ isn't reaction-identifiable w.r.t. its SDE, then there exists $\kappa \neq  \kappa' \in \R^{d}_{>0}$ such that the two stochastic processes $(X_t)_{t \geq 0}$ and $(X'_t)_{t \geq 0}$ associated to $(\cN,\kappa)$ and $(\cN,\kappa')$ have the same distribution, when stopped upon reaching the boundary of $U$. Then, the corresponding generators coincide,
as by Definition \ref{def_generator-0},
 \begin{align}\label{eq_L=L'}
  L f(x) = \lim_{t \downarrow 0}\frac{\mathbb{E} [f(X_t (x))] - f(x)}{t} = \lim_{t \downarrow 0}\frac{\mathbb{E} [f(X'_t (x))] - f(x)}{t} = L' f(x),
\end{align}
for any $x \in U$, and  test function $f \in C_c^{\infty} (U)$. This ensures that $\cN$ isn't reaction-identifiable w.r.t. its generator. \medskip

 (2) $\implies$:~
Suppose that $\cN$ and $\cN'$ are confoundable w.r.t. their generators. Then there exists $\kappa \in \R^{d}_{>0}$ and $\kappa' \in \R^{d'}_{>0}$, such that $(\cN,\kappa)$ and $(\cN',\kappa')$ have same generator on some non-empty bounded open subset $U$. By Theorem \ref{thm_eqv-sde-mp}\textemdash the equivalence of the martingale problem and the SDE \textemdash  we know that the stochastic processes
associated to $(\cN,\kappa)$ and $(\cN',\kappa')$
are weak solutions to the same SDE, and therefore share the same law up to the stopping time $\tau = \inf\{t > 0, X_{t} \notin U\}$ due to Theorem~\ref{uniqueness}. 
This proves that $\cN$ and $\cN'$ are confoundable w.r.t. their SDEs. Therefore, $\cN$ and $\cN'$ are unconfoundable w.r.t. their generators, whenever, they are unconfoundable w.r.t. their SDEs.

 $\impliedby$:~
 Suppose that $\cN$ and $\cN'$ are confoundable w.r.t. their SDEs, then there exists $\kappa \in \R^{d}_{>0}$ and $\kappa' \in \R^{d'}_{>0}$ such that the two stochastic processes $(X_t)_{t \geq 0}$ and $(X'_t)_{t \geq 0}$ associated to $(\cN,\kappa)$ and $(\cN',\kappa')$ have same distribution, when stopped upon reaching the boundary of $U$. Thus equation \eqref{eq_L=L'} holds
for any $x \in U$, and  test function $f \in C_c^{\infty} (U)$. This ensures that $\cN$ and $\cN'$ are confoundable w.r.t. their generators. The proof of this proposition is complete.
\end{proof}

\subsection{Proof of Theorem \ref{theorem1}} \label{prf_theorem1} 
We will need the following elementary fact.

\begin{lemma}\label{lem:monomial_independence}
Let $Y\subset \mathbb Z_{\ge 0}^n$ be a finite set and let $v_y\in\mathbb R^m$ be vectors for $y\in Y$. If
\[
\sum_{y\in Y} v_y\,x^y = 0 \qquad \text{for all } x\in U,\quad U\subseteq\mathbb R^n\text{ open},
\]
then $v_y=0$ for all $y\in Y$.
\end{lemma}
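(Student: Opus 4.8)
The plan is to reduce the statement to the classical fact that a nonzero real polynomial cannot vanish on a nonempty open set, and then invoke the linear independence of distinct monomials. First I would argue componentwise: writing $v_y = (v_y^{(1)}, \dots, v_y^{(m)}) \in \mathbb{R}^m$, the vector identity $\sum_{y \in Y} v_y\, x^y = 0$ on $U$ holds coordinate by coordinate, so for each fixed $k \in \{1, \dots, m\}$ the scalar polynomial $P_k(x) := \sum_{y \in Y} v_y^{(k)} x^y$ vanishes identically on $U$. It therefore suffices to prove the scalar case $m = 1$: if $P(x) = \sum_{y \in Y} c_y\, x^y$ (with $c_y \in \mathbb{R}$ and distinct multi-indices $y \in Y$) vanishes on a nonempty open set $U \subseteq \mathbb{R}^n$, then $c_y = 0$ for all $y$.

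For the scalar case I would proceed in two steps. The core step is to show that $P$ is the zero polynomial, which I would establish by induction on the number of variables $n$. For $n = 1$, any nonzero one-variable polynomial of degree $d$ has at most $d$ real roots, whereas $U$ contains an interval and hence infinitely many points; so $P$ must be the zero polynomial. For the inductive step, I would group $P$ by powers of the last variable, $P(x_1, \dots, x_n) = \sum_{j \geq 0} Q_j(x_1, \dots, x_{n-1})\, x_n^j$, where each $Q_j$ is a polynomial in $n-1$ variables. Shrinking $U$ to a smaller open product set if necessary, I may assume $U = U' \times I$ with $U' \subseteq \mathbb{R}^{n-1}$ open and $I$ an open interval. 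Fixing any $(x_1, \dots, x_{n-1}) \in U'$, the one-variable polynomial $x_n \mapsto P(x_1, \dots, x_n)$ vanishes on $I$, so by the base case all coefficients $Q_j(x_1, \dots, x_{n-1})$ vanish; since this holds for every point of the open set $U'$, the induction hypothesis forces $Q_j \equiv 0$ for every $j$, and hence $P \equiv 0$.

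Finally, once $P$ is known to be the zero polynomial as a function on $\mathbb{R}^n$, I would conclude $c_y = 0$ for all $y$ from the uniqueness of the monomial expansion, i.e. the fact that the distinct monomials $\{x^y : y \in Y\}$ are linearly independent over the infinite field $\mathbb{R}$. Running this over each coordinate $k$ recovers $v_y^{(k)} = 0$ for all $k$, that is $v_y = 0$ for every $y \in Y$, as claimed.

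The only real content — and thus the main, though entirely standard, obstacle — is the induction establishing the identity theorem for polynomials: the passage from ``$P$ vanishes on an open set'' to ``$P$ is the zero polynomial.'' Everything else is bookkeeping, as the componentwise reduction is immediate and the last step is just uniqueness of coefficients. An alternative to the induction would be to fix $x_0 \in U$, note that all partial derivatives of the smooth function $P$ vanish throughout $U$, and deduce that the Taylor (hence monomial) coefficients of the shifted polynomial $z \mapsto P(x_0 + z)$ all vanish; since an affine change of variables preserves being the zero polynomial, this again yields $P \equiv 0$.
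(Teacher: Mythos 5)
Your proof is correct and follows essentially the same route as the paper: reduce to a single coordinate and invoke the fact that a polynomial vanishing on a nonempty open subset of $\mathbb{R}^n$ is the zero polynomial, whence all monomial coefficients vanish. The only difference is that the paper cites this fact (to \cite{mityagin2015zerosetrealanalytic}) while you supply a self-contained induction on the number of variables, which is a standard and correct argument.
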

\begin{proof}
Each coordinate of $\sum_{y\in Y} v_y x^y$ is a polynomial in $(x_1,\dots,x_n)$ that vanishes on the open set $U\subseteq \mathbb R^n$. Hence it is the zero polynomial and all coefficients vanish \cite{mityagin2015zerosetrealanalytic}.
\end{proof}

\begin{proof}[Proof of Theorem~\ref{theorem1}]$(1)\implies (2)$: Let $\cN$ be an RN such that $(2)$ fails. Then, there exist rate constant vectors $\kappa \neq \kappa'$, whose corresponding generators coincide on a nonempty open set $U$. Recall that the generators take the form \eqref{generator}. The fact that two generators coincide on $C_c^{\infty} (U)$ is equivalent to their coefficients $A_i$ and $B_i$, which are polynomials, coinciding on $U$. Since a nonzero polynomial cannot vanish on a nonempty open set, it follows that the coefficients (and hence the generators) coincide on all of $\R^n_{>0}$ (resp. on $C^\infty_c(\mathbb{R}^n_{>0})$), see Lemma \ref{lem:monomial_independence}. As a consequence, $\cN$ is not identifiable w.r.t. its generator; that is $(1)$ fails. This proves the implication $(1) \implies (2)$.

Notice that in the above argument, we do not actually require the underlying RNs with rate constant vectors $\kappa$ and $\kappa'$ to share the same network structure. Therefore, the proof still justifies Remark \ref{rmk-theorem1} without any modification.

$(2) \implies (3)$: Assume that on an open subset $U$ as given the generators are different for all $\kappa \neq \kappa'$, but that there is a source complex $y \in \cC$ such that the set of vectors 
$$\left\{\left(y' - y,(y' - y)\cdot (y' - y)^{\intercal}\right) \big| \, y\to y'\in\cR\right\}$$
is linearly dependent. Then there exist some real numbers $\alpha_{y\to y'}$, not all zero  such that
\begin{align*}
\left(\sum_{y' \colon y \to y'\in\cR} \alpha_{y\to y'}(y' - y),\sum_{y' \colon y \to y'\in\cR} \alpha_{y\to y'} (y' - y)\cdot (y' - y)^{\intercal} \right)  
= 0.
\end{align*}
Hence we can choose rate constant vectors $\k,\k' \in \R^{d}_{>0}$ such that $\k_{y \to y'}- \k'_{y \to y'} = \alpha_{y \to y'}$ for all coefficients of reactions $y \to y'\in\cR$, and $\k_{R} = \k'_{R} = 1$ for other reactions $R \in\cR$. Then, recalling \eqref{drift_vector}, \eqref{diffusion_matrix}, and \eqref{generator}, for these rate constant vectors,
with $L$ and $L'$ denoting the generator with $k$ and $k'$ respectively,
\begin{align*}
 L f(x) - L' f(x) = & \sum_{y\to y'\in \cR} \left\langle (y' - y)(\k_{y\to y'}-\k'_{y\to y'})x^y, \nabla\right\rangle f \\ 
& + \frac{1}{2} \sum_{y\to y'\in \cR} \left(\nabla^{\intercal} \cdot (y' - y)\cdot(y' - y)^{\intercal}(\k_{y\to y'}-\k'_{y\to y'}) x^y \cdot \nabla \right) f  = 0.
\end{align*}
This contradicts with our assumption, and thus verifies $(2) \implies (3)$.

$(3) \implies (1)$: 
 Assume that for all source complexes $y \in \cC$ the set of vectors 
$$\left\{\left(y' - y,(y' - y)\cdot (y' - y)^{\intercal}\right) \big| \, y\to y'\in\cR \right\}$$
is linearly independent. Let $\k,\k'\in\R^d_{>0}$ such that $L f(x) - L' f(x) = 0$, for all test functions $f\in C^{\infty}_c(\R_{> 0}^n)$ and $x \in R_{> 0}^n$. In other words,

$$\sum_{y \in \cC} \left(\sum_{y' \colon y\to y'\in \cR}(y' - y)(\k_{y\to y'} - \k'_{y\to y'})x^y,\sum_{y' \colon y\to y'\in \cR}(y' - y)\cdot(y' - y)^{\intercal}(\k_{y\to y'}-k'_{y\to y'})x^y \right)= 0,$$
where the right hand side is the zero-function $\R^n\to \R^{n}\times\R^n \otimes \R^n$. This yields that
$$\sum_{y\in \cC}x^y\sum_{y' \colon  y\to y'\in \cR} (\k_{y\to y'}-\k'_{y\to y'}) \left((y' - y),(y' - y)\cdot(y' - y)^{\intercal}\right)= 0.$$
Since these vectors are polynomials in $x$ vanishing on $\R^n_{>0}$, their polynomial coefficients must be zero. Therefore, for each source complex $y\in\cC$,
$$
\sum_{y' \colon y\to y'\in \cR}(\k_{y\to y'}-\k'_{y\to y'}) \left((y' - y),(y' - y)\cdot(y' - y)^{\intercal}\right)= 0 .
$$
By the linear independence, we get that $\k_{y\to y'}-\k'_{y\to y'}=0$. This concludes the proof of $(3) \implies (1)$. The proof of this theorem is complete.
\end{proof}

\subsection{Proof of Theorem \ref{confoundability}}\label{prf_confoundability}
{
First, note that the condition ${\rm Cone}_{\cR}(y)\cap {\rm Cone}_{\cR'}(y)\neq\emptyset$ for all $y\in \cC\cup\cC'$ implies that $\cN$ and $\cN'$ have the same source complexes: if $y$ is not a source complex of $\cR$, then ${\rm Cone}_{\cR}(y)=\{0\}$ by convention on the empty sum (and similarly for $\cR'$), but the coefficients of the coordinates in ${\rm Cone}_{\cR'}(y)\subset\R^n \otimes \R^n$ from \eqref{def_cone} are positive, so the intersection is empty. That is the difference to the ODE case of the proof of \cite[Theorem 4.4]{ident_CRNs} that was remarked in \cite{szederkenyi2009comment}.Thus it suffices to prove the equivalence between generator confoundability and the cone condition.

\medskip
 $\implies$:~
Assume $\cN$ and $\cN'$ are confoundable with respect to their generators. Then there exist rate constant vectors $\kappa\in\mathbb R^{|\cR|}_{>0}$ and $\kappa'\in\mathbb R^{|\cR'|}_{>0}$ such that, for all $x\in\mathbb R^n_{>0}$,
\begin{equation}\label{eq:gen_conf_poly}
\begin{dcases}
\displaystyle
\sum_{y\in \cC\cup\cC'}
\Bigg(
\sum_{\substack{y':\,y\to y'\in\cR}} (y'-y)\,\kappa_{y\to y'}
-
\sum_{\substack{y':\,y\to y'\in\cR'}} (y'-y)\,\kappa'_{y\to y'}
\Bigg)x^y = 0,\\[1.2ex]
\displaystyle
\sum_{y\in \cC\cup\cC'}
\Bigg(
\sum_{\substack{y':\,y\to y'\in\cR}} (y'-y)\cdot(y'-y)^{\intercal}\,\kappa_{y\to y'}
-
\sum_{\substack{y':\,y\to y'\in\cR'}} (y'-y)\cdot(y'-y)^{\intercal}\,\kappa'_{y\to y'}
\Bigg)x^y = 0.
\end{dcases}
\end{equation}
Apply Lemma~\ref{lem:monomial_independence} to the first (with $m=n$) and the second line simultaneously. We obtain, for every $y\in \cC\cup\cC'$,
\begin{equation}\label{eq:coeffwise_equalities}
\begin{dcases}
\displaystyle
\sum_{\substack{y':\,y\to y'\in\cR}} (y'-y)\,\kappa_{y\to y'}
=
\sum_{\substack{y':\,y\to y'\in\cR'}} (y'-y)\,\kappa'_{y\to y'},\\[1.2ex]
\displaystyle
\sum_{\substack{y':\,y\to y'\in\cR}} (y'-y)\cdot(y'-y)^{\intercal}\,\kappa_{y\to y'}
=
\sum_{\substack{y':\,y\to y'\in\cR'}} (y'-y)\cdot(y'-y)^{\intercal}\,\kappa'_{y\to y'}.
\end{dcases}
\end{equation}
Fix a source complex $y$. The pair determined by the left-hand sides of~\eqref{eq:coeffwise_equalities} belongs to ${\rm Cone}_{\cR}(y)$ by definition, while the equal pair determined by the right-hand sides belongs to ${\rm Cone}_{\cR'}(y)$. Hence
\[
{\rm Cone}_{\cR}(y)\cap {\rm Cone}_{\cR'}(y)\neq \emptyset
\qquad \text{for every source complex } y.
\]

\medskip
 $\impliedby$:~
Assume now that $\cN$ and $\cN'$ have the same source complexes and that for each source complex $y$,
${\rm Cone}_{\cR}(y)\cap {\rm Cone}_{\cR'}(y)\neq \emptyset$.
For each such $y$, choose an element of the intersection and corresponding positive rate subvectors
$\kappa^{(y)}$ (for reactions in $\cR$ with source $y$) and $\kappa'^{(y)}$ (for reactions in $\cR'$ with source $y$) such that
\begin{equation}\label{eq:local_choice}
\begin{dcases}
\displaystyle
\sum_{\substack{y':\,y\to y'\in\cR}} (y'-y)\,\kappa^{(y)}_{y\to y'}
=
\sum_{\substack{y':\,y\to y'\in\cR'}} (y'-y)\,\kappa'^{(y)}_{y\to y'},\\[1.2ex]
\displaystyle
\sum_{\substack{y':\,y\to y'\in\cR}} (y'-y)\cdot(y'-y)^{\intercal}\,\kappa^{(y)}_{y\to y'}
=
\sum_{\substack{y':\,y\to y'\in\cR'}} (y'-y)\cdot(y'-y)^{\intercal}\,\kappa'^{(y)}_{y\to y'}.
\end{dcases}
\end{equation}
Define global rate constant vectors $\kappa\in\mathbb R^{|\cR|}_{>0}$ and $\kappa'\in\mathbb R^{|\cR'|}_{>0}$ by setting, for each reaction $y\to y'$,
\[
\kappa_{y\to y'} := \kappa^{(y)}_{y\to y'},
\qquad
\kappa'_{y\to y'} := \kappa'^{(y)}_{y\to y'}.
\]
This is well-defined because each reaction has a unique source complex $y$. Then~\eqref{eq:local_choice} implies~\eqref{eq:coeffwise_equalities} for every source complex $y$.
Multiplying~\eqref{eq:coeffwise_equalities} by $x^y$ and summing over $y$ yields~\eqref{eq:gen_conf_poly} for all $x\in\mathbb R^n_{>0}$.
Therefore $\cN$ and $\cN'$ are confoundable with respect to their generators.
}

\subsection{Proof of Theorem \ref{thm_linear-conj}} \label{prf_linear-conj}
The proof is similar in structure to that of Theorem \ref{confoundability}.
First, $(\cN, \k)$ and $(\cN', k')$ are linear conjugate w.r.t. their SDEs, if and only if there exists a linear mapping $h' (x) = G x$, such that, for any bounded open subset $U \subseteq \R_{>0}^n$ \eqref{linear_map} holds. Applying the same reasoning as in Theorem~\ref{confoundability}, we conclude that $\cN$ and $\cN'$ must have the same source complexes to be linear conjugate. On the other hand, \cite[Lemma 3.1]{johnston2011linear} asserts the matrix $G$ consists of at most positively scaling and reindexing coordinates. Additionally, $h' (X_{t \wedge \tau})$ is a Markov process solving the martingale problem for the generator
\begin{equation*}
    L^{h'} f(x) = \langle G A(x), \nabla \rangle f + (\nabla^{\intercal} \cdot G B G^{\intercal} (x) \cdot \nabla) f .
\end{equation*}
Thus, \eqref{linear_map} is equivalent to equations 
\begin{equation*}
    \begin{dcases}
     &  \left(\sum\limits_{y' \colon y\to y'\in \cR}(y' - y) \k_{y\to y'} - \sum\limits_{y' \colon y\to y'\in \cR'}G(y' - y)\k'_{y\to y'} \right) x^y = 0, \\
     & \left(\sum\limits_{y' \colon y\to y'\in \cR}  (y' - y)\cdot(y' - y)^{\intercal}  \k_{y\to y'}-\sum\limits_{y' \colon y\to y'\in \cR'}G(y' - y)\cdot(y' - y)^{\intercal}G^{\intercal} \k'_{y\to y'} \right) x^y = 0,
\end{dcases}
\end{equation*}
for every source complex $y$ and state $x \in U$, which holds if and only if $
    {\rm Cone}_{\cR} (y) \cap  {\rm Cone}^{G}_{\cR'} (y) \neq \emptyset$. This completes the proof of this theorem.
\qed

\subsection{Proof of Proposition \ref{prop_linear-conj-rate}}\label{prf_linear-conj-rate}

Let $U \subseteq \R_{> 0}^n$ be an arbitrary bounded open set. Let $X_{t \wedge \tau} (x)$ and $X_{t\wedge \tau'}' (z)$ be 
solutions to the LAs for $(\cN, \k)$ and $(\cN', \k')$, starting at $x \in U$ and $z = h(x) \coloneqq G^{-1} x \in h(U)$, respectively, up to stopping times given by \eqref{def_tau-tau'}. Thus, concerning \eqref{drift_vector}, \eqref{diffusion_matrix}, employing It\^{o}'s formula, and recalling that ${G}$ is a diagonal matrix, we can write
\begin{align*}
  f \big(h \big(X_{t\wedge \tau}(x)\big)\big) = & f(z) + \sum_{i = 1}^n  \int_0^{t\wedge \tau}   \sum_{r = 1}^{d} c_i^{-1} (y'_{i,r} - y_{i,r} ) \k_r X_{t\wedge \tau}(x)^{y_r} f^{(i)} \big(h \big(X_{t\wedge \tau}(x)\big)\big) \\ 
 & + \frac{1}{2}\sum_{i,j = 1}^n \int_0^{t\wedge \tau}  \sum_{r = 1}^d c_i^{-1} c_j^{-1} (y'_{i,r} - y_{i,r}) (y'_{j,r} - y_{j,r}) \k_{r} X_{t\wedge \tau}(x)^{y_r}  f^{(i, j)} \big(X_{t\wedge \tau}(x)\big)  + M_{t \wedge \tau}\\ 
 = &  \sum_{i = 1}^n  \sum_{y \in \cC} h \big(X_{t\wedge \tau}(x)\big)^{y} c^y \sum_{y' \colon y \to y'  \in \cR}  c_i^{-1} \k_{y \to y'} (y'_{i} - y_{i} )  \frac{\partial}{\partial z_i}  f^{(i)} \big(h \big(X_{t\wedge \tau}(x)\big)\big) \\ 
  & +\frac{1}{2}\sum_{i,j = 1}^n   \sum_{y \in \cC}  h \big(X_{t\wedge \tau}(x)\big)^y c^y  \sum_{y' \colon y \to y' \in \cR} c_i^{-1} c_j^{-1}  \k_{y \to y'} (y_{i}' - y_{i}) (y_{j}' - y_{j})   f^{(i, j)} \big(X_{t\wedge \tau}(x)\big)  + M_{t \wedge \tau},
\end{align*}
where $f^{(i)} (x) \coloneqq \frac{\partial}{\partial x_i} f(x)$, $f^{(i, j)} (x) \coloneqq \frac{\partial^2}{\partial x_i \partial x_j} f(x)$, and $M_{t \wedge \tau}$ is a martingale. Therefore, the Markov process $h (X_{t \wedge \tau} (x)) = G^{-1} X_{t \wedge \tau} (x)$ solves the martingale problem for $L^h$ given by
\begin{align*}
  L^h f (z) 
  = & \sum_{i = 1}^n  \sum_{y \in \cC} z^{y} c^y \sum_{y' \colon y \to y'  \in \cR}  c_i^{-1} \k_{y \to y'} (y'_{i} - y_{i} )  \frac{\partial}{\partial z_i} f(z) \\ 
  & +\frac{1}{2}\sum_{i,j = 1}^n   \sum_{y \in \cC}  z^y c^y  \sum_{y' \colon y \to y' \in \cR} c_i^{-1} c_j^{-1}  \k_{y \to y'} (y_{i}' - y_{i}) (y_{j}' - y_{j})  \frac{\partial^2}{\partial z_i\partial z_j}f(z),
\end{align*}
for all $z = h(x) \in h(U)$. 
On the other hand, formulas \eqref{eq_linear-conj-1} and \eqref{eq_linear-conj-2} suggests that for all $i,j \in \{1,\dots, n\}$, and any source complex $y$,
\[
  c^y  \sum_{y' \colon y\to y'\in \cR} c_i^{-1} \k_{y\to y'}(y_i' - y_i) =   \sum_{y' \colon y\to y'\in \cR'} c^y \beta_{y\to y'} (y_i' - y_i) = \sum_{y' \colon y\to y'\in \cR'} \k'_{y\to y'} (y_i' - y_i)
\]
and
\[
  c^y  \sum_{y' \colon y\to y'\in \cR} c_i^{-1} c_j^{-1} \k_{y \to y'}(y_i' - y_j) \cdot(y'_j - y_j) =  \sum_{y' \colon y\to y'\in \cR'} \k'_{y\to y} (y_i' - y_i) \cdot (y_j' - y_j).   
\]
As a consequence, $L^h$ coincides with $L'$ the generator for $X'_{t \wedge \tau'} (h(x)) = X'_{t \wedge \tau'} (z)$ 
\begin{align*}
  L' f (z) 
  = & \sum_{i = 1}^n  \sum_{y \in \cC} z^{y} \sum_{y' \colon y \to y'  \in \cR'} \k_{y \to y'}' (y'_{i} - y_{i})  \frac{\partial}{\partial z_i} f(z) 
  +\frac{1}{2}\sum_{i,j = 1}^n   \sum_{y \in \cC} z^{y} \sum_{y' \colon y \to y' \in \cR'}  \k_{y \to y'}' (y_{i}' - y_{i}) \cdot(y_{j}' - y_{j})  \frac{\partial^2}{\partial z_i\partial z_j}f(z).
\end{align*}
This completes the proof of Proposition \ref{prop_linear-conj-rate}.
\qed

\section*{Code availability}
A Julia implementation for assessing reaction-identifiability, confoundability, and linear conjugacy with respect to both ODEs and SDEs is available at: 
 \url{https://github.com/faullouis/Identifiability-of-SDEs-for-RNs}.

 See the Julia documentation at \url{https://docs.julialang.org/}
 for a comprehensive user guide to the Julia programming language.

\section*{Acknowledgements}

We thank Christian Mazza, Enrico Bibbona, Peter Pfaffelhuber and Jan van Waaij for helpful discussions. LH was partially supported by the Swiss National Science Foundation grant (P2FRP2 188023).

\section*{Competing Interests}
The authors declare that they have no competing interests.

\providecommand{\bysame}{\leavevmode\hbox to3em{\hrulefill}\thinspace}
\providecommand{\MR}{\relax\ifhmode\unskip\space\fi MR }
\providecommand{\MRhref}[2]{%
  \href{http://www.ams.org/mathscinet-getitem?mr=#1}{#2}
}
\providecommand{\href}[2]{#2}


\end{document}